\documentclass[11pt]{amsart}
\usepackage{amsfonts}
\usepackage{stmaryrd}
\usepackage{mathrsfs}
\usepackage{amssymb,amsmath,amsthm,latexsym}
\usepackage{epsfig}
\usepackage{graphicx}
\usepackage{appendix}
\usepackage{lineno,version}

\setlength{\textwidth}{162mm} \setlength{\textheight}{22cm}
\setlength{\headheight}{0cm} \setlength{\topmargin}{0.3cm}
\setlength{\oddsidemargin}{0cm} \setlength{\evensidemargin}{0cm}
\setlength{\parskip}{1mm} \setlength{\unitlength}{1mm}

\font\tenbi=cmmib10   at 11 pt
\font\sevenbi=cmmib10 at 9pt
\font\fivebi=cmmib7 at 6pt
\newfam\bifam
\textfont\bifam=\tenbi \scriptfont\bifam=\sevenbi  \scriptscriptfont\bifam=\fivebi

\def\bi{\fam\bifam\tenbi}
\font\sixtdb=msbm10 at 16 pt \font\tendb=msbm10 at 12 pt  \font\sevendb=msbm7
\newfam\dbfam
\textfont\dbfam=\sixtdb

\textfont\dbfam=\tendb \scriptfont\dbfam=\sevendb



\newtheorem{thm}{Theorem}[section]
\newtheorem{lem}{Lemma}[section]

\newtheorem{example}{Example}[section]

\def\bq{\begin{equation}}
\def\eq{\end{equation}}
\def\ben{\begin{eqnarray*}}
\def\een{\end{eqnarray*}}
\def\br{\begin{eqnarray}}
\def\er{\end{eqnarray}}
\def\brr{\bq\begin{array}{rcl}}
\def\err{\end{array}\eq}
\def\bex{\begin{equation*}}
\def\eex{\end{equation*}}

\def\d{{\mathrm {d} }}
\def\E{\mathcal{S}}
\def\F{\mathcal{F}}
\def\C{\mathcal{C}}
\def\G{\mathcal{G}}
\def\P{\mathcal{P}}
\def\I{\mathcal{I}}

\def\text#1{\hbox{#1}}
\def\Dt{\triangle t}

\newcommand{\ds}{\displaystyle}
\newcommand{\nn}{\nonumber}
\newcommand{\bsub}{\begin{subequations}}
\newcommand{\esub}{\end{subequations}$\!$}

\begin{document}
\title[Parallel in time
for Volterra integral equations]
{Parallel in time algorithm
with spectral-subdomain enhancement
for Volterra integral equations}

\author{Xianjuan  Li}
\address{College of Mathematics and
Computer Science, Fuzhou University,  Fuzhou, 350108,  China}
\email{xjli\_math@yahoo.com.cn}

\author{Tao Tang}
\address{Department of Mathematics, Hong Kong Baptist University, Kowloon
Tong, Hong Kong}
\email{ttang@math.hkbu.edu.hk}
\author{Chuanju Xu}
\address{School of Mathematical Sciences,
Xiamen University, 361005 Xiamen, China.
The research of this author
was partially supported by National NSF of China (Grant numbers 11071203 and 91130002).
}
\email{cjxu@xmu.edu.cn}

\begin{abstract}
This paper proposes a parallel in time (called also time parareal) method
to solve Volterra integral equations of the second kind.
The parallel in time approach follows the same spirit 
as the domain decomposition that consists of
breaking the domain of computation into subdomains 
and solving iteratively the sub-problems
in a parallel way. 
To obtain high order of accuracy, a spectral collocation 
accuracy enhancement in subdomains will be employed.
Our main contributions in this work are two folds:
(i) a time parareal method is designed for the integral equations, 
which to our knowledge is the first of its kind. 
The new method is an iterative process combining a coarse prediction in the whole domain 
with fine corrections in subdomains by using spectral approximation,
leading to an algorithm of very high accuracy; (ii) 
a rigorous convergence analysis of the overall method is provided.
The numerical experiment confirms that the 
overall computational cost is considerably reduced
while the desired spectral rate of convergence can be obtained.

\end{abstract}
\subjclass{35Q99, 35R35, 65M12, 65M70}

\keywords{Time parareal, spectral collocation, Volterra integral equations}

\date{\today}
\maketitle

\section{Introduction}
We consider the linear Volterra integral equations (VIEs) of the
second kind
\bq \label{1e1}
{u}(t)-
\int_0^tK(t,s){u}(s) \d s =g(t), \qquad \forall t\in I:=[0,T],
\eq
where $g$ and $K$ are sufficiently smooth in $I$ and
$I\times I$, respectively; and $K(t,t)\neq0$ for all $t\in I$.
Under these assumptions, smooth
solution $u(t)$ to \eqref{1e1} exists and unique, see, e.g.,
\cite{brunner2004collocation}.

The presence of the integral in (\ref{1e1})  makes the problem
{\em globally} time-dependent.
This means that the solution at time $t_k$ depends
on the solutions at all previous time $t<t_k$. Consequently, it may
require large storage if the solution time is large
or high accuracy of approximations is needed.
This may become more serious if partial integro-differential
equations are considered. To handle this, we will
design our method by breaking the domain into sub-domains
(as in the domain decomposition approach) and then solving
iteratively the sub-problems in parallel way.
More precisely, we divide the time interval
$[0, T]$ into $N$ equi-spaced subintervals
and then break the original problem into a series
of independent problems on the small sub-intervals.
These independent problems are solved by a fine approximation
which can be implemented in a parallel way, together with some coarse
grid approximations which have to be implemented in a sequential way.

The parallel in time algorithm for a model ordinary differential equation
(ODE) was initially introduced by Lions, Maday, and
Turinici \cite{lions2001} for solving evolution problems in
parallel.
It can be interpreted as a predictor-corrector
scheme \cite{bal2003parallelization, bal23parareal},
which involves a prediction step based on a coarse
approximation and a correction step computed in parallel
 based on a fine approximation.
Even though the time direction seems intrinsically
sequential, the combination of a coarse
and a fine solution procedure has proven to
allow for more rapid (convergent) solutions if
parallel architectures are available.
The parallel in time algorithm has received
considerable attention over the past years, especially
in the community of the domain decomposition
methods \cite{debit2002domain},
fluid and structure problems \cite{farhat2003time},
the Navier-Stokes equations \cite{fischer2005parareal},
quantum control problems \cite{maday2003parallel}, and so on.
For the parallel in time method based on the finite difference
scheme, the convergence analysis for an ODE
problem was given in \cite{gander29analysis}.

In our algorithm, we also build in a recent spectral method
approach to obtain exponential rate of convergence, see
\cite{chen2010convergence,tang2008spectral,JSCxlt12}.
The main advantage of using high order methods for integral
equations is their low storage requirement with
the desired precision;
this advantage makes high order methods attractive.
Among the high order methods spectral methods are known
very useful due to its exponential rate of convergence,
which is also demonstrated for solving VIEs \cite{stw11,FMClt12}.
However, a drawback of the spectral method is also well-known; i.e.,
the matrix associated with the spectral method is full
and the computational cost grows more quickly than that of a low order method.
Thus for long integration it is desirable to combine the spectral method with
domain decomposition techniques to avoid using a single
high-degree polynomials.

It is noted that there have been great interests in
studying the parabolic integro-differential equations, in
particular the study of discontinuous-Galerkin methods,
see, e.g., \cite{LTW98,MBMS11}.
These studies are very relevant to the present study and it will be
interesting to extend the present study to parabolic
integro-differential equations.
Another class of relevant problems is about
space-time fractional diffusion equation
which have been extensively studied
\cite{li2009space,li2010space}.
It is noted that the problems mentioned above also
contain a weakly singular kernel in the memory term, which
can provide extra difficulties in convergence analysis,
see, e.g., the recent work on the spectral methods for
VIEs with a weakly singular
kernel \cite{chen2010convergence,FMClt12}.

This paper is the first attempt to approximate solutions
of the VIEs by the parallel in time method. In addition, this article
will provide a full convergence analysis
for the proposed method. We will also demonstrate numerically
the efficiency and the convergence of the proposed method on
some sample problems.

This paper is organized as follows. In section 2,
we construct the
parallel in time method based on the spectral collocation scheme for the underlying
equation, and outline the main results. 
Some lemmas useful for the convergence analysis are provided in section 3.
The convergence analysis for the proposed method in $L^{\infty}$ under some assumptions is given
in section 4.
Numerical experiments are carried out in section 5.
In the final section we analyze the parallelism efficiency of the overall algorithm,
together with a description of some implementation details.

\section{Outline of the Parallem in Time Method and the
Main Results}
\setcounter{equation}{0}

The time interval $I=[0,T]$ is first partitioned into $N$
subintervals,  determined by the grid points
$0=t_0<t_1<t_2< \cdots < t_N=T$
with  $t_n=n\Delta t, \Delta t={T}/{N}$.
We denote this partition by $I=\cup_{n=1}^N I_n$ with
$I_n=[t_{n-1},t_n]$.

\subsection{Outline of the method}

In the obvious way, the problem (\ref{1e1}) is equivalent to the following
system of $N$ integral equations:
\begin{equation} \label{n2e1}
u_n(t)-K_{nn} u_n(t) = g(t) + \sum ^{n-1}_{j=1} K_{nj} u_j(t), \quad
{\rm on} \;\; I_n=[t_{n-1}, t_n],\ 1\le n \le N.
\end{equation}
with $u_j =u |_{I_j}$,
\begin{equation} \label{n2e2}
K_{nn} u_n(t) := \int^t_{t_{n-1} } K(t,s) u_n(s)ds, \;\;\;
{\rm and} \;\;
K_{nj} u_j(t) := \int^{t_j}_{t_{j-1}} K(t,s) u_j(s)ds, \;\;
1\le j \le n-1.
\end{equation}
The solution of (\ref{n2e1}) can be expressed through the solution operators $\E_n$ as follows:
\begin{equation} \label{n2e3}
u_n(t)=\E_n (t; u_1, \cdots, u_{n-1}) \;\; {\rm for} \; t\in I_n.
\end{equation}
Likewise, at the discrete level, we have the solution operators $\F_n$
for the collocation method to be described later:
\begin{equation} \label{n2e4}
U_n(t)=\F_n (t; U_1, \cdots, U_{n-1}) \;\; {\rm for} \; t\in I_n.
\end{equation}
with $U_n$ a polynomial of degree $M$. The main idea of the proposed
parallel in time method is to take a second family of discrete
solution operators, $\G_n$, defined in the same way using polynomials
of a lower degree $\tilde{M} < M$, and to set up the iteration:
\begin{equation} \label{n2e5}
U^k_n(t)=\G_n(t; U^k_1, \cdots, U^k_{n-1}) +
\C_n(t; U^{k-1}_1, \cdots, U^{k-1}_{n-1}), \;\; k\ge 1,
\end{equation}
with the initial value
\[
U^0_n = \G_n(t; U^0_1, \cdots, U^0_{n-1}),
\]
where the correction term in (\ref{n2e5}) is defined by
\begin{equation} \label{n2e5a}
 \C_n(t; U^{k-1}_1, \cdots, U^{k-1}_{n-1})
:= \F_n (t; U^{k-1}_1, \cdots, U^{k-1}_{n-1}) -\G_n
(t; U^{k-1}_1, \cdots, U^{k-1}_{n-1}).
\end{equation}
Here, the key is that the $\C_n$ can be computed in parallel for
$1\le n\le N$. The term $\G_n$ in (\ref{n2e5}) must be computed
sequentially, but this is relatively cheap if $\tilde {M}$ is small
compared to $M$. Thus, provided the iteration converges rapidly, we
can use multiple processors to obtain a high accuracy solution
in a small multiple of the time needed to compute a low
accuracy (sequential) solution.

It is pointed out that the way of presenting the
parallel in time method  places it in the category of
the predictor-corrector scheme,
where the predictor is $\G_{n} (t,U_{1}^k, \cdots, U_{n-1}^k) $
while the corrector is $\C_n(t; U^{k-1}_1, \cdots, U^{k-1}_{n-1})$.

\subsection{Outline of the main results}

The main result of this paper will be the following:
\begin{equation}
\Vert u_n - U^k_n\Vert _\infty
= O(M^{3/4-m} + (\tilde M/2)^{(3/4-m)(k+1)}),
\end{equation}
and numerical experiments confirm this convergence behavior
in practice. Thus $U^k_n$ is accurate to $O(M^{3/4-m})$ if
$M\approx (\tilde M/2)^{k+1}$.
We emphasize that the analysis covers a fully-discrete scheme in
which quadratures are used to approximate the integrals
that occur in the coefficients and right-hand side
of the resulting linear system that must be solved
at the $n$-th interval to compute $U^k_n$.

\subsection{The operators $\F_n$ and $\G_n$}

We begin by discussing the operator $\F_n$ defined in (\ref{n2e4}).
In this work,
we will use a {\em spectral collocation method}
to define this approximation.
Define $\mathcal{P}_{M}(I_n)$
as the polynomial spaces of degree less than or equal to $M$, with
$I_n=[t_{n-1},t_n]$.
Denote $L_{M}(x)$ the
Legendre polynomial of degree $M$.
Let $x_{i}$ be the points of the Legendre-Gauss (LG)
quadrature formula, defined by
$L_{M+1}(x_{i})=0, i=0, \cdots, M$, arranged by increasing order.
The associated weights of the LG quadrature formula are denoted
by
$\omega_{i}$, $0 \leq i \leq M$.
Then it is well-known the following identity:
\bex
\int_{-1}^1
\varphi(x)  \d x =
\sum^M_{i=0} \varphi(x_{i}) \omega_{i},
\;\;\;  \forall \varphi\in \mathcal{P}_{2M+1}(-1,1).
\eex
The discrete $L^2$ inner product associated to the LG quadrature is denoted by:
\br\label{dip}
(\varphi,\psi)_{M}
:= \sum^M_{i=0} \varphi(x_{i})\psi(x_{i}) \omega_{i}.
\er
Furthermore, define the LG points $\{\xi_n^i\}^M_{i=0}$ on the element
$I_n=[t_{n-1},t_n]$, i.e.,
\bex
\xi_n^i= \frac{t_n-t_{n-1}}{2} x_{i} + \frac{t_n+t_{n-1}}{2},
\quad  0\leq i\leq M, \ \  1\leq n\leq N,
\eex
and the corresponding weights
$\rho_n^i={\Delta t}\omega_{i}/2$.
Then it holds
\bex
\int_{I_n} \varphi(x)  \d x =
\sum^M_{i=0} \varphi(\xi_n^i) \rho_n^i,
\quad  \forall \varphi\in \mathcal{P}_{2M+1}(I_n),
\;\; 1\le n \le N.
\eex
We now use the Legendre-collocation method to
determine the operator $\F_n$ in (\ref{n2e4}). More precisely, we want to
find $U_n(t)\in \mathcal{P}_{M}(I_n)$ with $1\le n\le N$
such that
for all $0\leq i\leq M$,
\begin{equation}
\label{1e4}
U_n(\xi_n^i)-
\ds\int_{t_{n-1}}^{\xi_n^i}K(\xi_n^i,s) U_n(s) \d s
= g(\xi_n^i)+ \sum^{n-1}_{j=1} {\ds\int_{t_{j-1}}^{t_{j}}}
K(\xi_n^i,s) U_{j}(s) \d s,
\;\; 1\le n \le N.
\end{equation}
In the implementation, the integral terms on the left hand sides of \eqref{1e4} are evaluated by using
the following Gauss quadrature:
\bq \label{bK0}
 \int_{t_{n-1}}^{\xi_n^i}K(\xi_n^i,s) U_n(s) \d s
=\int_{-1}^{1}\bar K(\xi_n^i,s_n^i) U_n(s_n^i) \d x
\thickapprox
\left(\bar K(\xi_n^i,s_n^i), U_n(s_n^i)\right)_{M},
\eq
where
$
s_n^i:=s_n^i(x):= \frac{\xi_n^i-t_{n-1}}{2}x
+\frac{\xi_n^i+t_{n-1}}{2}, -1<x< 1
$, and
\bq\label{bK}
\bar K(\xi_n^i,s_n^i)
:=\left(\frac{\xi_n^i-t_{n-1}}{2}\right)K(\xi_n^i,s_n^i).
\eq
The right-hand sides of \eqref{1e4} are computed in a similar way:
\bq
\int_{t_{j-1}}^{t_j}K(\xi_n^i,s) U_j(s) \d s
= \frac{\Delta t}{2}\int_{-1}^{1}K(\xi_n^i,s_j) U_j(s_j) \d x
\thickapprox
\frac{\Delta t}{2}\left(K(\xi_n^i,s_j), U_j(s_j)\right)_{M},
\eq
where
\bq
\label{s_l}
s_j=s_{j}(x)= \frac{\Delta t}{2}x+\frac{t_j+t_{j-1}}{2},\quad -1<x<1.
\eq
With the help of the above numerical integrations,
\eqref{1e4} is further approximated by: $0\leq i\leq M$,
\bq
\label{1e5}
U_n(\xi_n^i)-
\left(\bar K(\xi_n^i,s_n^i), U_n(s_n^i)\right)_{ M}
= g(\xi_n^i)
+{\Dt\over 2}\sum^{n-1}_{j=1} \left(K(\xi_n^i,s_{j}),
U_{j}(s_{j})\right)_{M},
\;\; 1\le n \le N.
\eq
The above linear system defines the fine operator $\F_n$ in (\ref{n2e4}).

The operator $\G_n$ is defined by the same spectral collocation
method but with the degree $\tilde{M}$, which is much less than $M$.

Let
$\{\eta_n^i\}_{i=0}^{\tilde{M}}$ be a set of LG-points on the element  $[t_{n-1},t_n]$ corresponding
to the weight $\{\varrho_n^i\}_{i=0}^{\tilde{M}}$.
Then the coarse operator $\G_n(t, V_1, \cdots, V_{n-1})$ in \eqref{n2e5} consists in finding
$V_n(t)\in \P_{\tilde{M}}(I_n)$,
such that, for all $0\leq i\leq \tilde{M}$
\bq
\label{1e7}
 V_n(\eta_n^i)-
\left(\bar K(\eta_n^i,\tau_n^i), V_n(\tau_n^i)\right)_{ \tilde{M}}
= g(\eta_n^i)
+{\Delta t\over 2}\sum^{n-1}_{j=1}
\left(K(\eta_n^i,s_{j}), V_{j}(s_{j})
\right)_{\tilde {M}}, \quad
1\le n \le N,
\eq
where 
$
\tau_n^i:=\tau_n^i (x):= \frac{\eta_n^i-t_{n-1}}{2}x+\frac{\eta_n^i+t_{n-1}}{2},
\ -1<x<1,
$
and $\bar K$ and $s_j$ are respectively defined in \eqref{bK} and \eqref{s_l}.

\section{Useful Lemmas }
\setcounter{equation}{0}
We first introduce some notations.
Let $ \Lambda$ be an arbitrary bounded interval.
For non-negative integer $m$, 
$H^m(\Lambda)$ stands for the standard Sobolev space equipped with
the norm and seminorm $\|v\|_{m,\Lambda}$ and
$|v|_{k,\Lambda}, 0\le k \le m$. 
Particularly $L^2(\Lambda)=H^0(\Lambda)$, 
equipped with the standard $L^2$-inner product and norm.
Similarly, the norm of the space
$W^{m,\infty}(\Lambda)$ is denoted by
$\|v\|_{W^{m,\infty}(\Lambda)}$ or $\|v\|_{\infty,\Lambda}$ if $m=0$.
The error analysis needs  the following seminorm:
\bex
|v|_{H^{m;M}(\Lambda)}=\left(\sum_{k=\min(m,M+1)}^m
\|\partial_x^{(k)}v\|_{\Lambda}^2\right)^{\frac{1}{2}}.
\eex
Hereafter, in cases where
no confusion would arise, the domain symbol $\Lambda$  may be dropped from the notations.
We denote by $c$ generic positive constants independent
of the discretization parameters, but may depend on the kernel function $K(\cdot,\cdot)$ or
$T$.

We further introduce two approximation operators.
Firstly, we define the Lagrange interpolation operator
$\I_{\Lambda}^M: \
\mathcal{C}(\Lambda)\rightarrow\mathcal{P}_M(\Lambda)$,
by:
 $\forall v \in \mathcal{C}({\Lambda}), \I_\Lambda^M v\in \mathcal{P}_M(\Lambda)$,
 such that
\bex
\I_\Lambda^Mv(z_{i})=v(z_{i}), \quad 0\leq i\leq M,
\eex
where $\{z_{i}\}_{i=0}^M$ is the set of LG-points on the interval $\Lambda$.
This polynomial can be expressed as
\bex
\I_\Lambda^Mv(x)=\sum_{i=0}^Mv(z_i)h_\Lambda^i(x),
\eex
where $h_\Lambda^i$ is the Lagrange interpolation basis function associated with
$\{z_{i}\}_{i=0}^M$.
Particularly, we use $\I_n^M $ (resp., $h_n^i(x)$) to replace
$\I_\Lambda^M $ (resp., $h_\Lambda^i(x)$)
when $\Lambda=I_n$.

Then for all $v\in H^m(\Lambda), m\geqslant1$,
the following optimal error estimates hold (see, e.g.,
\cite{canuto2006spectral,stw11}):
\br
&& \|v-\I_\Lambda^M v\|
\lesssim
M^{-m}|v|_{H^{m; M}}, \label{3e3}
\\
 &&  \|v-\I_\Lambda^Mv\|_{\infty}
  \lesssim
  M^{\frac{3}{4}-m}|v|_{H^{m;M}}.
\label{3e5}
\er
For the discrete $L^2$-inner product defined in \eqref{dip},
it holds the following error estimate \cite{canuto2006spectral}:  \
$\forall \phi\in\mathcal{P}_M(\Lambda)$,
\begin{eqnarray}
|(v,\phi)-(v,\phi)_{M}|
\lesssim
M^{-m}|v|_{H^{m;M}}
\|\phi\|,\ \  v\in H^m(\Lambda), \ \  m\geq1.\label{3e4}
\end{eqnarray}

The following result gives
the Lebesgue constant for the Lagrange interpolation polynomials
associated with the LG-points.

\begin{lem} \label{lem3e4}
(p.329, eq.(9) in \cite{mastroianni2001optimal})
Let $\{h_\Lambda^j(x)\}_{j=0}^M$ be the Lagrange interpolation
basis associated with the $M+1$ LG-points on the interval $\Lambda$. Then
\bq \label{3e6}
\|I_\Lambda^M\|_{\infty}
:=\max_{x\in\Lambda}\sum_{j=0}^M|h_\Lambda^j(x)|
\lesssim \sqrt{M}.
\eq
\end{lem}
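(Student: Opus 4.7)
The plan is to deduce Lemma 3.1 by an affine reduction to $[-1,1]$ followed by a standard nodal-split argument for Lagrange bases at Gauss-Legendre nodes.

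First, by the affine change of variables $x=\frac{b-a}{2}\hat x+\frac{a+b}{2}$ sending $\Lambda=[a,b]$ to $[-1,1]$, the LG-points map to the canonical LG-points $\{x_j\}_{j=0}^M$ on $[-1,1]$, and the Lagrange basis $h_\Lambda^j$ pulls back to the canonical basis $h^j$. Hence $\max_{x\in\Lambda}\sum_{j=0}^M|h_\Lambda^j(x)|$ is an affine invariant and it suffices to establish the estimate on $[-1,1]$.

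Second, on $[-1,1]$ I would use the explicit representation $h^j(x)=L_{M+1}(x)/[(x-x_j)\,L'_{M+1}(x_j)]$, where $\{x_j\}$ are the zeros of $L_{M+1}$, together with the classical Darboux/Szeg\H{o} asymptotics for Legendre polynomials: writing $x=\cos\theta$ and $x_j=\cos\theta_j$, one has $|L_{M+1}(\cos\theta)|\lesssim(1+M\sin\theta)^{-1/2}$, a matching lower bound for $|L'_{M+1}(x_j)|$ of order $M^{3/2}(1+M\sin\theta_j)^{-1/2}/\sin\theta_j$, and uniform nodal spacing $\theta_{j+1}-\theta_j\sim 1/M$.

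Third, I would split the sum $\sum_j|h^j(\cos\theta)|$ into a \emph{near} part ($|\theta-\theta_j|\lesssim 1/M$, contributing only $O(1)$ terms of size $O(1)$ each via $h^j(x_j)=1$ and continuity) and a \emph{far} part, bounded by inserting the asymptotics into the explicit formula. Using $|\cos\theta-\cos\theta_j|\gtrsim|\theta-\theta_j|(\sin\theta+\sin\theta_j)$, the far part is dominated by a Riemann-sum approximation to an integral, which one evaluates region by region (according to whether $\sin\theta$ or $\sin\theta_j$ dominates). The worst case occurs at the endpoints $\theta\to 0,\pi$ and evaluates to $O(\sqrt M)$.

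The main obstacle is precisely this endpoint behaviour: for Chebyshev nodes the same scheme produces the well-known $\log M$ Lebesgue constant because the Chebyshev weight cancels the endpoint singularity of the interpolation kernel; for Gauss-Legendre nodes that cancellation is incomplete, and the residual $M^{1/2}$ factor localised near $\theta\in\{0,\pi\}$ is what produces the sharp $\sqrt M$ growth. Careful bookkeeping of the weight $\sqrt{\sin\theta_j}$ is therefore essential. Because the paper explicitly cites Mastroianni-V\'ertesi for the statement, my plan is to quote their estimate in lieu of reproducing the full endpoint calculation, recording only the reduction to $[-1,1]$ and the qualitative decomposition described above.
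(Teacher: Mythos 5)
The paper gives no proof of this lemma at all: it is quoted verbatim from the cited survey (Mastroianni--Occorsio, p.~329, eq.~(9)), so your plan of reducing to $[-1,1]$ and then invoking the reference for the hard endpoint estimate is essentially the same approach, and the affine-invariance reduction you add is correct (note, though, that the paper's citation is Mastroianni--Occorsio, not Mastroianni--V\'ertesi). One caution about the sketch you record along the way: your recalled nodal asymptotics are off by a factor $\sqrt{M}$. With the normalization $L_{M+1}(1)=1$ consistent with your bound $|L_{M+1}(\cos\theta)|\lesssim (1+M\sin\theta)^{-1/2}$, the zeros satisfy $|L_{M+1}'(x_j)|\asymp M^{1/2}(\sin\theta_j)^{-3/2}$ in the bulk (and $\asymp M^2$ at the extreme nodes, uniformly $\asymp M^{1/2}(\sin\theta_j+M^{-1})^{-3/2}$), whereas your expression $M^{3/2}(1+M\sin\theta_j)^{-1/2}/\sin\theta_j$ is larger by $M^{1/2}$ and is therefore false as a lower bound; if you actually carried out the near/far splitting with it, the bookkeeping that produces the sharp $\sqrt{M}$ endpoint contribution would not close. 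Since you ultimately defer that calculation to the cited reference, exactly as the paper does, this does not invalidate your proposal, but the asymptotic should be corrected if you ever write out the argument in full.
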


Using the standard Gronwall inequality, we obtain the following
estimate for the Volterra integral equations.

\begin{lem}\label{lem3}
Consider the following Volterra equation:
\bq\label{gron-eq1}
 u(t)=v(t)+\int_{0}^t K(t,s)u(s)\d s.
\eq
If $v\in W^{m,\infty}$ and $K(t,s)$ is sufficiently smooth, then
\bq\label{3e16a}
\left\|u\right\|_{W^{m,\infty}}
\leq
c2^m \left\|v\right\|_{W^{m,\infty}}.
\eq
\end{lem}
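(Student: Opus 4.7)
The plan is to prove the estimate (\ref{3e16a}) by induction on $m$, using the classical Gronwall inequality for the base case and successive differentiation of the equation (\ref{gron-eq1}) to reduce the case $m$ to the case $m-1$.

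For the base case $m=0$, I would take absolute values in (\ref{gron-eq1}) to obtain $|u(t)| \le \|v\|_{\infty} + \|K\|_{\infty}\int_0^t |u(s)|\,ds$, and then apply the classical (integral form of) Gronwall's inequality to deduce $\|u\|_\infty \le e^{\|K\|_\infty T}\|v\|_\infty$, which gives (\ref{3e16a}) with $m=0$ and $c = e^{\|K\|_\infty T}$.

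For the inductive step, assume the bound for all orders below $m$. Differentiating (\ref{gron-eq1}) once yields
\bex
u'(t) = v'(t) + K(t,t)u(t) + \int_0^t \partial_t K(t,s)\, u(s)\,ds,
\eex
and iterating this differentiation $m$ times together with the Leibniz rule applied to the boundary contribution at $s=t$ produces an identity of the form
\bex
u^{(m)}(t) = v^{(m)}(t) + \sum_{j=0}^{m-1} B_{m,j}(t)\,u^{(j)}(t) + \int_0^t \partial_t^m K(t,s)\, u(s)\,ds,
\eex
where each coefficient $B_{m,j}(t)$ is a linear combination (with combinatorial weights) of partial derivatives of $K$ of total order at most $m-1$ evaluated at $s=t$, hence bounded by a constant $C_K$ depending only on $K$. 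Taking the $L^\infty$ norm, inserting the induction hypothesis $\|u^{(j)}\|_\infty \le c\,2^j\|v\|_{W^{j,\infty}}$ for $j<m$, and using the base case for the remaining integral term, I would obtain $\|u^{(m)}\|_\infty \le \|v\|_{W^{m,\infty}} + C_K c\sum_{j=0}^{m-1}2^j\,\|v\|_{W^{m,\infty}} + C_K Tc\|v\|_\infty$; the identity $\sum_{j=0}^{m-1}2^j = 2^m - 1$ then yields exactly the targeted $c\,2^m$ scaling after adjusting the prefactor.

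The main obstacle will be the precise accounting of constants to ensure the geometric growth factor is $2^m$ rather than some larger $C^m$: this requires choosing $c$ large enough (depending on $K$ and $T$ but not on $m$) so that the additive contributions $\|v\|_{W^{m,\infty}}$ and $C_K T c\|v\|_\infty$ can be absorbed into the leading $C_K c(2^m-1)\|v\|_{W^{m,\infty}}$ term. A secondary technical point is that, because $u(s)$ rather than $u'(s)$ appears under the integral after differentiation, the identity above is not a Volterra equation for $u^{(m)}$ and Gronwall cannot be reapplied at each level; instead it is an \emph{explicit} recursion expressing $u^{(m)}$ in terms of $v^{(m)}$ and strictly lower-order derivatives of $u$, which is what makes the induction close cleanly with the stated $2^m$ constant.
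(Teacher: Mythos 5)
Your base case and the differentiated identity for $u^{(m)}$ are correct, but the inductive step does not close at the claimed rate $2^m$, and that rate is precisely the content of the lemma. First, the coefficients $B_{m,j}$ are not bounded by an $m$-independent constant: unwinding the repeated differentiation, the coefficient of $u^{(j)}$ is a sum of the form $\sum_{i=0}^{m-1-j}\binom{m-1-i}{j}\,\frac{d^{\,m-1-i-j}}{dt^{\,m-1-i-j}}\bigl[(\partial_t^iK)(t,t)\bigr]$, whose combinatorial weight alone is $\binom{m}{j+1}$ by the hockey-stick identity. Keeping these weights, your sum becomes $\sum_{j=0}^{m-1}\binom{m}{j+1}2^j=\tfrac{1}{2}(3^m-1)$, so this recursion naturally produces growth at least like $3^m$ (and compounding the induction makes it worse), not $2^m$. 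Second, even if one suppresses the weights and writes $|B_{m,j}|\le C_K$, the inequality you need to close the induction, $1+C_Kc(2^m-1)+C_KTc\le c\,2^m$, forces $C_K$ to be essentially at most $1$; enlarging $c$ does not help, because $c$ multiplies the dominant term on \emph{both} sides. Since $C_K$ is a genuine kernel-dependent constant that can be large, the induction cannot be made to yield the stated $c\,2^m$ bound along this route.

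The paper sidesteps this self-referential recursion entirely by using the resolvent kernel: $u(t)=v(t)+\int_0^tR(t,s)v(s)\,\d s$, with $R$ smooth and determined only by $K$. After the substitution $s=t\tau$, which freezes the integration domain to $[0,1]$ and eliminates the boundary terms at $s=t$ that generate your $B_{m,j}$, a single Leibniz expansion gives
\bex
u^{(m)}(t)=v^{(m)}(t)+\sum_{j=0}^{m}C_m^j\int_0^1\partial_t^{(m-j)}\bigl(tR(t,t\tau)\bigr)v^{(j)}(t\tau)\,\tau^j\,\d\tau,
\eex
in which only derivatives of $v$ appear; the combinatorics then contributes exactly $\sum_{j}C_m^j=2^m$, and all resolvent-derivative bounds are absorbed into $c$. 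To repair your argument you would either need to establish such an explicit representation of $u$ in terms of $v$ first (which is the paper's proof), or accept a constant growing faster than $2^m$, which would weaken the factor $(\tilde M/2)^{(\frac34-m)(k+1)}$ in the final convergence theorem.
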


\begin{proof}
It is known (see, e.g., \cite{brunner2004collocation}) that there exists
a smooth {\em resolvent} kernel $R(t, s)$ depending only on $K(t,s)$
such that the solution of (\ref{3e16a}) satisfies
\[
 u(t)= v(t) + \int ^t_0 R(t,s) v(s)ds.
\]
Using the transformation $\tau=s/t$ leads to
\[
u(t)= v(t) + \int ^1_0 t R(t, t\tau) v(t\tau) d\tau.
\]
Differentiating the above equation with respect to $t$ $m$-times gives
\[
u^{(m)}(t) = v^{(m)}(t) + \sum ^{m}_{j=0} C^j_m \int^1_{0} \partial ^{(m-j)}_t
(t R(t, t\tau)) v^{(j)}(t\tau) \tau^j d\tau,
\]
which leads to
\[
\Vert u^{(m)} \Vert \le \Vert v^{(m)}\Vert + c\sum ^{m}_{j=0} C^j_m
\int^1_0  \tau^jd\tau
\Vert v^{(m)}\Vert \le c 2^m \Vert v^{(m)}\Vert ,
\]
where the constsnt $c$ depends on the regularity of $R$, or quivalently
the regularity of $K$. This completes the proof of the lemma.
\end{proof}

We also need the following discrete Gronwall lemma (see, e.g.,
\cite{brunner2004collocation}).

\begin{lem}\label{lem-gron}
Assume that  $\phi_n, p_n$ and $k_n$ are three sequences, $k_n$ is non-negative,
and they satisfy
\bex
\left\{
\begin{array}{l}
  \phi_0  \leq   g_0, \\
  \phi_n  \leq  g_0+ \sum\limits_{s=0}^{n-1} p_s +\sum\limits_{s=0}^{n-1} k_s \phi_s, \ \ n\geq 1.
\end{array}
\right.
\eex
Then it holds
\bex
\left\{
\begin{array}{l}
  \phi_1 \leq g_0(1+k_0) + p_0\\
  \phi_n \leq g_0 \prod\limits_{s=0}^{n-1} (1+k_s) + \sum\limits_{s=0}^{n-2}p_s \prod\limits_{\tau=s+1}^{n-1}(1+k_r) +p_{n-1}, \ \ n\geq 2.
\end{array}
\right.
\eex
Moreover, if $g_0\geq 0$  and $p_n \geq 0$ for all $n\geq 0$, then it holds
\bex
\phi_n \leq \left(g_0+ \sum_{s=0}^{n-1} p_s\right) \exp \left(\sum_{s=0}^{n-1}k_s\right), \ \ n\geq 1.
\eex
\end{lem}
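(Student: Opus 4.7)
The plan is to prove both parts by a short induction on $n$, driven by an auxiliary sequence that majorizes $\phi_n$ and satisfies a clean one-step recursion. Define
\[
\psi_n := g_0 + \sum_{s=0}^{n-1} p_s + \sum_{s=0}^{n-1} k_s \phi_s, \qquad n \geq 1,
\]
so that the hypothesis gives $\phi_n \leq \psi_n$. Subtracting consecutive values and using $k_n \geq 0$ together with $\phi_n \leq \psi_n$ yields the recursion
\[
\psi_{n+1} = \psi_n + p_n + k_n \phi_n \leq (1 + k_n)\psi_n + p_n.
\]
This single relation is the engine of the whole argument.

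For the base case $n = 1$, I would simply use $\phi_0 \leq g_0$ in the defining inequality for $\phi_1$ to obtain $\phi_1 \leq g_0 + p_0 + k_0 g_0 = g_0(1+k_0) + p_0$, which matches the claim. For $n \geq 2$, I would iterate the recursion for $\psi_n$ backwards from index $n$ down to $1$: at each step, multiplying the current bound by $(1+k_s)$ introduces one more factor into the leading product while the source term $p_s$ picks up the trailing product $\prod_{\tau=s+1}^{n-1}(1+k_\tau)$. Combining with $\phi_n \leq \psi_n$ and isolating the last source term $p_{n-1}$ (which gets no trailing product) delivers exactly the stated closed form
\[
\phi_n \leq g_0 \prod_{s=0}^{n-1}(1+k_s) + \sum_{s=0}^{n-2} p_s \prod_{\tau=s+1}^{n-1}(1+k_\tau) + p_{n-1}.
\]

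For the exponential bound under the sign assumptions $g_0 \geq 0$ and $p_n \geq 0$, I would apply the elementary inequality $1+x \leq e^x$ for $x \geq 0$ to each factor in every product, so that $\prod_{s \in S}(1+k_s) \leq \exp\bigl(\sum_{s \in S} k_s\bigr) \leq \exp\bigl(\sum_{s=0}^{n-1} k_s\bigr)$. Bounding all the product factors in the previous display by this single exponential and pulling it out of the sum then collapses the expression to $\bigl(g_0 + \sum_{s=0}^{n-1} p_s\bigr)\exp\bigl(\sum_{s=0}^{n-1} k_s\bigr)$, as desired.

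Conceptually there is no real obstacle; the argument is entirely elementary once the auxiliary sequence $\psi_n$ is introduced. The one place that requires care is the index bookkeeping in the iteration of the recursion for $\psi_n$, in particular separating out the last term $p_{n-1}$ cleanly and making sure the trailing products $\prod_{\tau = s+1}^{n-1}(1+k_\tau)$ are attached to the correct source terms; this is routine but needs to be written out carefully.
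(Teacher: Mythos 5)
Your proof is correct. Note that the paper does not actually prove this lemma---it is quoted as a known discrete Gronwall inequality with a citation to Brunner's book---so there is no in-paper argument to compare against; your elementary induction via the majorant $\psi_n = g_0 + \sum_{s=0}^{n-1}p_s + \sum_{s=0}^{n-1}k_s\phi_s$ and the one-step recursion $\psi_{n+1}\le(1+k_n)\psi_n+p_n$ is exactly the standard way to establish it, and the index bookkeeping you outline (the trailing products $\prod_{\tau=s+1}^{n-1}(1+k_\tau)$ attached to each $p_s$, with $p_{n-1}$ left bare) comes out right. The only point worth writing explicitly in a final version is that in the exponential bound the bare term $p_{n-1}$ is absorbed using $p_{n-1}\le p_{n-1}\exp\bigl(\sum_{s=0}^{n-1}k_s\bigr)$, which needs $p_{n-1}\ge 0$ and $\exp(\cdot)\ge 1$; this is exactly where the sign hypotheses of the second part enter, while the first (product) bound correctly uses only $k_s\ge 0$.
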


\section{Stability and Convergence Analysis}
\setcounter{equation}{0}

This section is devoted to the stability and convergence analysis
of the proposed parallel in time scheme.

\subsection{Stability}

\begin{lem}\label{lem1-1}
For  $1\leq n \leq N$, let $\F_n$ be the fine approximation operator
defined by the spectral collocation
scheme \eqref{1e5}, $\G_n$ defined by \eqref{1e7},
$g_{M}=\I_n^{M}g$,
and  $\{\psi_i\}_{i=1}^{n-1}$ be a polynomial sequence.
Then for sufficiently large $M$ and $\tilde M$ we have
\br
&&\left\|\F_{n} (t;\psi_{1}, \cdots, \psi_{n-1})
\right\|_\infty \le c \Big( \|g_{M}\|_\infty+ \Delta t
\sum ^{n-1} _{j=1} \|\psi_j\|_\infty \Big),
\label{sta-eq20} \\
&&\left\|\G_{n} (t;\psi_{1}, \cdots, \psi_{n-1})
\right\|_\infty \le c \Big(
\|g_{\tilde{M}}\|_\infty+ \Delta t
\sum ^{n-1} _{j=1} \|\psi_j\|_\infty \Big), \;\;\; n\ge1,
\label{sta-eq21}
\er
where it is understood that $\F_1(t;\psi_0):=\F_1(t)$ and $\G_1(t;\psi_0):=\G_1(t)$.
\end{lem}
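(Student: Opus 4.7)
The plan is to derive \eqref{sta-eq20} by expanding $U_n := \F_n(\cdot;\psi_1,\ldots,\psi_{n-1}) \in \P_M(I_n)$ in the Lagrange basis at the LG nodes, $U_n(t)=\sum_{i=0}^M U_n(\xi_n^i)\,h_n^i(t)$, and substituting the collocation identity \eqref{1e5} into each nodal coefficient. This splits $U_n$ into three pieces: (a) $\I_n^M g = g_M$, which contributes $\|g_M\|_\infty$ at once; (b) an \emph{external} piece $\frac{\Delta t}{2}\sum_{j=1}^{n-1}\I_n^M\phi_j(t)$ with $\phi_j(t) := (K(t,s_j),\psi_j(s_j))_M$; and (c) an \emph{internal} self-coupling piece $\sum_{i=0}^M h_n^i(t)\,(\bar K(\xi_n^i,s_n^i),U_n(s_n^i))_M$. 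The bound \eqref{sta-eq21} for $\G_n$ will then follow by the identical argument applied to \eqref{1e7} with $(M,\xi_n^i,s_n^i)$ replaced throughout by $(\tilde M,\eta_n^i,\tau_n^i)$.

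For piece (b), I will observe that $\phi_j(t)=\sum_k K(t,s_j(x_k))\psi_j(s_j(x_k))\omega_k$ is smooth in $t$ with derivative bounds $\|\phi_j^{(p)}\|_\infty\le c\,\|\psi_j\|_\infty$ that are \emph{uniform in the discretization parameters}, since $\psi_j$ enters only as a pointwise factor inside a bounded quadrature. The $L^\infty$ interpolation estimate \eqref{3e5} then gives $\|\I_n^M\phi_j\|_\infty\le c\,\|\psi_j\|_\infty$ for $M$ sufficiently large, and summing over $j$ produces the required $c\,\Delta t\sum_j\|\psi_j\|_\infty$ term. For piece (c), I will use the crude bound $|(\bar K(\xi_n^i,s_n^i),U_n(s_n^i))_M|\le\Delta t\,\|K\|_\infty\,\|U_n\|_\infty$ combined with the Lebesgue constant estimate of Lemma~\ref{lem3e4} to bound the sup norm of piece (c) by $c\,\sqrt M\,\Delta t\,\|K\|_\infty\,\|U_n\|_\infty$, and then absorb this contribution into the left-hand side to close \eqref{sta-eq20}.

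The step I expect to be the main obstacle is the closure on piece (c): the $\sqrt M$ Lebesgue factor forces a smallness condition of the form $c\sqrt M\,\Delta t\,\|K\|_\infty \le \tfrac12$, which is presumably what the hypothesis ``sufficiently large $M$ and $\tilde M$'' silently accommodates together with the fixed time step $\Delta t=T/N$. If a constant $c$ that is genuinely independent of both $M$ and $\Delta t$ is required, the alternative I would fall back on is to bootstrap through the continuous stability Lemma~\ref{lem3} on the single subinterval $I_n$ --- comparing $U_n$ with $\I_n^M v_n$, where $v_n$ solves the continuous Volterra equation on $I_n$ with data $g + \sum_{j=1}^{n-1}\int_{t_{j-1}}^{t_j}K(\cdot,s)\psi_j(s)\,\d s$, and closing the resulting polynomial error equation via the discrete Gronwall Lemma~\ref{lem-gron}.
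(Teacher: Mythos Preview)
Your treatment of pieces (a) and (b) is fine, but the primary argument for the self-coupling piece (c) has a genuine gap: the smallness condition $c\sqrt{M}\,\Delta t\,\|K\|_\infty\le\tfrac12$ goes in the \emph{wrong} direction. With $\Delta t=T/N$ fixed, sending $M\to\infty$ makes $\sqrt{M}\,\Delta t$ blow up, so the hypothesis ``sufficiently large $M$'' cannot accommodate it---it makes matters worse. The crude bound on $(\bar K(\xi_n^i,s_n^i),U_n(s_n^i))_M$ therefore cannot close the estimate.

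The paper's fix is to add and subtract the \emph{continuous} inner product: write $(\bar K(\xi_n^i,s_n^i),U_n(s_n^i))_M = (\bar K(\xi_n^i,s_n^i),U_n(s_n^i)) + A_n^i$, where $A_n^i$ is the LG quadrature error. After multiplying by $h_n^i$ and summing, the continuous part becomes $\I_n^M\int_{t_{n-1}}^t K(t,s)\psi_n(s)\,\d s$, which one further splits into the exact integral (absorbed by the continuous Gronwall inequality on $I_n$) plus an interpolation error $I_n$ of size $cM^{-1/4}\|\psi_n\|_\infty$ via \eqref{3e5} with $m=1$. The quadrature error $A_n^i$ is bounded by $c\Delta t\,M^{-1}\|\psi_n\|_\infty$ using \eqref{3e4}, and after the Lebesgue factor $\sqrt{M}$ from Lemma~\ref{lem3e4} this contributes $c\Delta t\,M^{-1/2}\|\psi_n\|_\infty$. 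Both residual terms carry a \emph{negative} power of $M$, so for $M$ large they are each $\le\tfrac{1}{3c}\|\psi_n\|_\infty$ and can be absorbed into the left-hand side. Your fallback idea of invoking a continuous Volterra solution is in the right spirit, but the paper does it more directly: no auxiliary $v_n$ is needed, one simply rewrites the collocation identity as a perturbed continuous Volterra equation for $\psi_n$ itself and applies Gronwall.
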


\begin{proof}
Since the only difference between $\F_{n}$ and $\G_{n}$ is
the polynomial degree,
we only need to prove  \eqref{sta-eq20}.
Let $\psi_n:=\F_{n} (t;\psi_{1}, \cdots, \psi_{n-1}) $.
It follows from the
spectral collocation scheme \eqref{1e5} that
\bq  \label{3eq1a}
\psi_n(\xi_n^i)=
g(\xi_n^i)
+ \left(\bar{K}(\xi_n^i,s_n^i), \psi_n(s_n^i)\right)_{M}
+
\frac{\Delta t}{2}\sum^{n-1}_{j=0}
\left(K(\xi_n^i, s_{j}), \psi_{j}(s_{j})\right)_{M},\ 0\le i\le M,
\eq
which can be further re-organized as
\bq \label{3eq1}
\psi_n(\xi_n^i)=
g(\xi_n^i)+\left(\bar{K}(\xi_n^i,s_n^i),
\psi_n(s_n^i)\right) +A^i_n +
\frac{\Delta t}{2}\sum^{n-1}_{j=1} \left(K(\xi_n^i, s_{j}),
\psi_{j}(s_{j})\right) +\sum^{n-1}_{j=1}  A^i_{j},
\eq
where
\ben
&& A_n^i=
-\left(\bar{K}(\xi_n^i,s_n^i), \psi_n(s_n^i)\right)
+\left(\bar{K}(\xi_n^i,s_n^i), \psi_n(s_n^i)\right)_{ M},\nn
\\
&& A_j^i=
-\frac{\Delta t}{2}\left(K(\xi_n^i,s_{j}), \psi_j(s_{j})\right)
+\frac{\Delta t}{2}\left(K(\xi_n^i,s_{j}), \psi_j(s_{j})\right)_{ M},\quad  1\leq j\leq n-1.\nn
\een
Multiplying both sides of \eqref{3eq1} by the Lagrange basis
function $h_n^i$ and summing up from $i=0$ to $i=M$, we obtain
\br
\psi_n(t)
&=&
\I_n^Mg+
\I_n^M \int_{t_{n-1}}^t K(t,s) \psi_n(s) \d s
+\sum_{i=0}^{M} A_n^i h_n^i(t)
\label{sta-eq1} \\
&&\;\;\; + \sum^{n-1}_{j=1} \Big[ \frac{\Delta t}{2}
\I_n^M \left(K(t, s_{j}), \psi_{j}(s_{j})\right)
+ \sum_{i=0}^{M} A_{j}^i h_{j}^i(t)\Big]. \nn
\er
Consequently, we have
\bq \label{eqa1}
\psi_n(t)
=
\int_{t_{n-1}}^{t}  K(t, s) \psi_n(s) \d s +\I_n^Mg+\sum_{j=1}^{n} J_j+\sum_{j=1}^{n} I_j
+\sum_{j=1}^{n-1} R_j,
\eq
where
\ben
&& J_j =
\sum\limits_{i=0}^{M}A_j^ih_j^i, \quad 1\leq j \leq n,
\\
&& I_n =
\I_n^M\left(\bar{K}(t,s_n^i), \psi_n(s_n^i)\right)
-
\left(\bar{K}(t,s_n^i), \psi_n(s_n^i)\right), \\
&& I_j=\frac{\Delta t}{2}\I_n^M\left(K(t,s_{j}), \psi_j(s_{j})\right)
-\frac{\Delta t}{2}\left(K(t,s_{j}),\psi_j(s_{j})\right),
\quad 1\leq j \leq n-1,
\\
&& R_j =
\frac{\Delta t}{2}\left(K(t, s_{j}), \psi_j(s_j)\right), \quad 1\leq j \leq n-1.
\een
It follows from the standard Gronwall inequality for (\ref{eqa1})
that
\bq \label{sta-eq8}
\|\psi_n\|_\infty
\leq  c \left(\|g_M\|_\infty +
\sum_{j=1}^{n} \|J_j\|_\infty+ \sum_{j=1}^{n}\|I_j\|_\infty+\sum_{j=1}^{n-1}\|R_j\|_\infty\right).
\eq
We now estimate the right hand-side of (\ref{sta-eq8})
term by term. First, we estimate the terms $J_j$.
Using the error estimate \eqref{3e4} for the LG-quadrature, we have
 \br
\max_{0\leq i\leq M}|A_n^i|
&=&
\max_{0\leq i\leq M}\left|\left(\bar{K}(\xi_n^i,s_n^i), \psi_n(s_n^i)\right)_{ M}
-\int_{t_{n-1}}^{\xi_n^i}  K(\xi_n^i, s) \psi_n(s) \d s\right|
\label{3e12''} \\
&\leq &
 c\Delta tM^{-1}\max_{0\leq i\leq M}
|K(\xi_n^i,s_n^i)|_{H^{1;M}}
\max_{0\leq i\leq M}\|\psi_n(s_n^i)\|\nn \\
&\leq&
c_1 \Delta tM^{-1}
\|\psi_n\|_{\infty}, \nn
\er
where $c_1$ depends on $|K(\xi_n^i,s_n^i(\cdot))|_{H^{1;M}}$.
Similarly,
 \bq
\max_{0\leq i\leq M}|A_j^i|
\leq
 c_1\Delta tM^{-1}
\|\psi_j\|_{\infty}, \quad  1\leq j\leq n-1.\label{sta-eq13}
\eq
Hence, combining  \eqref{3e12''} and \eqref{sta-eq13} with Lemma \ref{lem3e4} gives
\br
\|J_n\|_{\infty}
=
\left\|\sum_{i=0}^{M} A_n^ih_n^i\right\|_{\infty}
\leq
\max_{0\leq i\leq M}|A_n^i|\max_{x\in I_n}\sum_{i=0}^{M}|h_n^i|
\leq
c_1\Delta t M^{-\frac{1}{2}}
\|\psi_n\|_{\infty}
\leq \frac{1}{3c} \|\psi_n\|_\infty
\nn 
\er
for sufficiently small $\Delta t$ or large $M$.
Following the same lines leads to
\bq
\|J_j\|_{\infty}
\leq c_1\Delta t M^{-\frac{1}{2}} \|\psi_j\|_\infty, \quad  1\leq j \leq n-1.\label{sta-eq14}
\eq
We now estimate the second term $I_j$.
For $j=n$, applying  \eqref{3e5} leads to
\br
\label{3e22}
\left\|I_n\right\|_\infty
&=&
\left\|(\I_n^M -\I)\int_{t_{n-1}}^t K(t,s) \psi_n(s)\d  s\right\|_{\infty}
\leq
c_1 M^{-\frac{1}{4}} \left|\int_{t_{n-1}}^t K(t,s)
\psi_n(s)\d  s \right|_{H^{1;M}}  \\
&\leq&  c_1 M^{-\frac{1}{4}}  \|\psi_n\|_\infty \leq
\frac{1}{3c}\|\psi_n\|_\infty. \nn
\er
For $ 1\leq j \leq n-1$, in virtue of \eqref{3e5}, we derive
\br
\label{sta-eq15}
\left\|I_j\right\|_\infty
&=&
\left\|(\I_n^M -\I)\int_{t_{j-1}}^{t_j} K(t,s) \psi_j(s)\d s\right\|_{\infty}
\leq c_1 M^{\frac{3}{4}-1} \left|\int_{t_{j-1}}^{t_j} K(t,s)
\psi_j(s) \d s\right|_{H^{1;M}} \nn \\
&\leq&
 c_1 \Delta t M^{-\frac{1}{4}} \left\| \psi_j \right\|_\infty.
\nn \er
It remains to estimate $R_j$. By a direct calculation, we have
\br\label{sta-eq16}
\|R_j\|_\infty=\frac{\Delta t}{2}\left\|\left(K(t, s_{j}), \psi_j(s_j)\right)\right\|_\infty
\leq
c\Delta t\|\psi_{j}\|_{\infty}, \quad 1\leq j \leq n-1.
\er
Finally, combining \eqref{sta-eq8}-\eqref{sta-eq16} gives
\br
\|\psi_n\|_\infty
&\leq &
c \|g_M\|_\infty+\frac{1}{3} \|\psi_n\|_\infty+
c\Delta t M^{-\frac{1}{2}} \sum^{n-1}_{j=1}\|\psi_j\|_\infty \\
&&+
\frac{1}{3}\|\psi_n\|_\infty+
 c\Delta t M^{-\frac{1}{4}} \sum^{n-1}_{j=1}\left\| \psi_j \right\|_\infty+
c\Delta t\sum^{n-1}_{j=1}\|\psi_{j}\|_{\infty}.\nn
\er
Now a simple rearrangement leads to \eqref{sta-eq20}.
\end{proof}

We are now ready to state and prove one of the main results of this paper.

\begin{thm}\label{thm1-1}(Stability)
The iteration scheme \eqref{n2e5}-\eqref{n2e5a}
is stable in the sense that the solution $U_n^k$ satisfies
\bq
\label{stab}
\|U_n^k\|_\infty \leq c\|g_{\tilde M}\|_\infty, \;\;\;  \forall k\geq 0, \;\;
1\leq  n \leq N.
\eq
\end{thm}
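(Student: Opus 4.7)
The plan is to prove \eqref{stab} by induction on the iteration index $k$, combining the stability estimates \eqref{sta-eq20}--\eqref{sta-eq21} of Lemma \ref{lem1-1} with the discrete Gronwall inequality of Lemma \ref{lem-gron}. For the base case $k=0$, the identity $U^0_n = \G_n(t; U^0_1, \ldots, U^0_{n-1})$ together with \eqref{sta-eq21} yields
\[
\|U^0_n\|_\infty \le c\|g_{\tilde M}\|_\infty + c\,\Delta t \sum_{j=1}^{n-1}\|U^0_j\|_\infty,
\]
and Lemma \ref{lem-gron} (applied with $g_0 = c\|g_{\tilde M}\|_\infty$, $p_s\equiv 0$, $k_s = c\Delta t$, using $N\Delta t = T$) then gives $\|U^0_n\|_\infty \le c\, e^{cT}\|g_{\tilde M}\|_\infty$, which is the desired bound after absorbing the exponential into $c$.

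For the inductive step, I assume \eqref{stab} holds at level $k-1$ for every $n$, and bound $\|U^k_n\|_\infty$ by splitting the defining identity \eqref{n2e5}--\eqref{n2e5a} via the triangle inequality into three pieces: $\G_n$ evaluated at the level-$k$ iterates, together with $\F_n$ and $\G_n$ each evaluated at the level-$(k-1)$ iterates. Applying Lemma \ref{lem1-1} to each summand, using the inductive hypothesis to bound the two level-$(k-1)$ contributions directly by $c\|g_{\tilde M}\|_\infty$ and invoking the smoothness of $g$ to replace $\|g_M\|_\infty$ by a constant multiple of $\|g_{\tilde M}\|_\infty$, produces an inequality of exactly the same form as in the base case,
\[
\|U^k_n\|_\infty \le c\|g_{\tilde M}\|_\infty + c\,\Delta t \sum_{j=1}^{n-1}\|U^k_j\|_\infty,
\]
which closes by a second application of Lemma \ref{lem-gron}, this time iterated in $n$ for fixed $k$.

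The main subtlety will be keeping the constant $c$ in \eqref{stab} uniform in both $k$ and $n$. This is tenable precisely because the only feedback from the current iteration level appearing on the right-hand side is $\G_n(\cdot; U^k_1, \ldots, U^k_{n-1})$, which carries the factor $\Delta t$ and is absorbed by the inner Gronwall in $n$; the correction term $\C_n = \F_n - \G_n$ depends only on $U^{k-1}$, so the outer induction in $k$ does not create a geometric blow-up of constants. The harmless replacement of $\|g_M\|_\infty$ by $\|g_{\tilde M}\|_\infty$ is the only place where smoothness of $g$ enters this argument.
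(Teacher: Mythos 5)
Your base case and the overall skeleton (triangle inequality on \eqref{n2e5}--\eqref{n2e5a}, Lemma \ref{lem1-1}, then the discrete Gronwall Lemma \ref{lem-gron} in $n$) agree with the paper, but the inductive step in $k$ does not close as you claim, and this is precisely the delicate point of the theorem. After applying Lemma \ref{lem1-1} you arrive, as in \eqref{sta-eq11}, at
$\|U_n^k\|_\infty \le c_0\Delta t\sum_{j=1}^{n-1}\|U_j^k\|_\infty + c\big(\|g_{\tilde M}\|_\infty + \Delta t\sum_{j=1}^{n-1}\|U_j^{k-1}\|_\infty\big)$.
If your inductive hypothesis is $\|U_j^{k-1}\|_\infty \le C_{k-1}\|g_{\tilde M}\|_\infty$, the inhomogeneous term becomes $c\,(1+T\,C_{k-1})\|g_{\tilde M}\|_\infty$, and the Gronwall step in $n$ then yields $C_k = c\,e^{c_0T}(1+T\,C_{k-1})$. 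Unless $c\,T\,e^{c_0T}<1$ --- a smallness condition on $T$ that the theorem does not assume (the numerics use $T=100$) --- this recursion makes $C_k$ grow geometrically in $k$, so you do not obtain a constant uniform in $k$. Your heuristic that the level-$(k-1)$ term cannot cause blow-up because it carries a factor $\Delta t$ and only the level-$k$ feedback needs the inner Gronwall is exactly where the argument fails: the factor $\Delta t\sum_{j\le n-1}$ only reduces the feedback of $C_{k-1}$ to a factor of order $T$, not to something smaller than $1/(c\,e^{c_0T})$, and each pass through the $n$-Gronwall multiplies by $e^{c_0T}$ again.

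The paper closes this gap by proving, by induction on $k$, the sharper intermediate estimate \eqref{stabx}, $\|U_n^k\|_\infty \le c_0\sum_{l=0}^{k}\frac{1}{l!}e^{c_0(l+1)(n-1)\Delta t}\|g_{\tilde M}\|_\infty$, rather than carrying a single constant. The mechanism is that when the previous level's bound is inserted into $\Delta t\sum_{j=1}^{n-1}(\cdot)$, the geometric sum gives $\Delta t\,\frac{e^{c_0(l+1)(n-1)\Delta t}-1}{e^{c_0(l+1)\Delta t}-1}$, and since $e^x-1\ge x$ the prefactor is bounded by $\frac{1}{c_0(l+1)}$; this gain of $1/(l+1)$ per iteration produces the factorial damping, and the resulting series converges (cf. \eqref{4e16a}) to a $k$-independent constant of order $e^{c_0T}e^{e^{c_0T}}$ --- exactly the double-exponential dependence on $T$ highlighted in the remark following the theorem. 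So either reproduce this refined induction (or an equivalent iterated-Gronwall/convolution argument that tracks the $1/l!$ gain), or restrict to small $T$; as written, your induction does not establish \eqref{stab} for general $T$.
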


\begin{proof}
For $k=0$, it follows from the initial value
$U^0_n = \G_n(t; U^0_1, \cdots, U^0_{n-1})$ and Lemma \ref{lem1-1}
that
\bq
\|U_n^0\|_\infty
=\left\|\G_{n} \left(t,U_{1}^0, \cdots,
U_{n-1}^0\right)\right\|_\infty
\leq c_0\Big(\|g_{\tilde M}\|_\infty+ \Delta t \sum^{n-1}_{j=1}
\|U_{j}^0\|_\infty \Big).
\eq
By applying the discrete Gronwall Lemma \ref{lem-gron}, we get
\bq
\|U_n^0\|_\infty
\leq   c_0e^{c_0(n-1)\Delta t}\|g_{\tilde M}\|_\infty
\leq
c_0e^{c_0T}\|g_{\tilde M}\|_\infty.
\label{sta-eq10}
\eq
For $k\geq 1$, according to the iteration scheme
\eqref{n2e5}-(\ref{n2e5a}), we have
\br
\|U_n^k \|_\infty
&\leq&
\left\|
 \G_{n} \left(t;U_{1}^k, U_{2}^k, \cdots, U_{n-1}^k\right)
\right\|_\infty \nn\\
&&+
\left\|\F_{n} \left(t;U_{1}^{k-1}, U_{2}^{k-1}, \cdots,
U_{n-1}^{k-1}\right)\right\|_\infty+
\left\|\G_{n} \left(t;U_{1}^{k-1}, U_{2}^{k-1}, \cdots,
U_{n-1}^{k-1}\right)\right\|_\infty.
\label{con-eq1}
\er
Applying  Lemma \ref{lem1-1}  to the right hand side of \eqref{con-eq1} yields
\br
\|U_n^k\|_\infty
&\leq &
c \Big(\|g_{\tilde M}\|_\infty+\Delta t \sum^{n-1}_{j=1}
\|U_{j}^k\|_\infty \Big)
+ c \Big(\|g_{\tilde{M}}\|_\infty+\|g_{M}\|_\infty+
\Delta t \sum^{n-1}_{j=1} \|U_{j}^{k-1}\|_\infty \Big)
\nn \\
&\leq &
c_0 \Delta t \sum^{n-1}_{j=1}
\|U_{j}^k\|_\infty
+ c \Big(\|g_{\tilde M}\|_\infty+
\Delta t \sum^{n-1}_{j=1} \|U_{j}^{k-1}\|_\infty \Big).
\label{sta-eq11}
\er
In the following, we will derive the following inequality
\br
\|U_n^k\|_\infty
\leq
c_0
\sum_{l=0}^k
\frac{e^{c_0(l+1)(n-1)\Delta t}}{l!}
\|g_{\tilde M}\|_\infty, \ \  \forall k\geq 0, 1\leq  n \leq N,
\label{stabx}
\er
where $c_0$ is a constant independent of $k$.
We do this by induction.
First, it follows from \eqref{sta-eq10} that (\ref{stabx}) is true for $k=0$.
We now show  that if (\ref{stabx}) holds for a given $k$
then it also holds for $k+1$.
It follows from \eqref{sta-eq11} (replace $k$ by $k+1$)
and the discrete Gronwall Lemma \ref{lem-gron} that
\bq
\|U_n^{k+1}\|_\infty \le
c_0 e^{c_0 (n-1)\Delta t}  \left(\|g_{\tilde M}\|_\infty+
\Delta t \sum^{n-1}_{j=1} \|U_{j}^{k}\|_\infty \right).
\eq
Then using the induction assmption  \eqref{stabx} gives
\br
\|U_n^{k+1}\|_\infty &\leq&
c_0 \|g_{\tilde M}\|_\infty e^{c_0 (n-1)\Delta t}
\left(1+c_0\Delta t
\sum_{l=0}^k
\frac{e^{c_0(l+1)(n-2)\Delta t}}{l!}
+
\cdots+ c_0\Delta t\sum_{l=0}^k
\frac{1}{l!}\right)\nn\\
&=&
c_0\|g_{\tilde M}\|_\infty e^{c_0 (n-1)\Delta t}
\left(1+c_0\Delta t
\sum_{l=0}^k
\frac{1}{l!}\frac{e^{c_0(l+1)(n-1)\Delta t}-1}{e^{c_0 (l+1)\Delta t}-1}
\right)\nn\\
&\leq&
c_0 \|g_{\tilde M}\|_\infty e^{c_0 (n-1)\Delta t}
\left(1+
\sum_{l=0}^k
\frac{1}{(l+1)!}(e^{c_0(l+1)(n-1)\Delta t}-1)
\right)\nn\\
&\leq&
c_0 \|g_{\tilde M}\|_\infty e^{c_0 (n-1)\Delta t}
\left(1+
\sum_{l=0}^k
\frac{1}{(l+1)!}e^{c_0(l+1)(n-1)\Delta t}
\right)\nn\\
&\leq&
c_0 \|g_{\tilde M}\|_\infty
\sum_{l=0}^{k+1}
\frac{e^{c_0(l+1)(n-1)\Delta t}}{l!}.
\nn  \er
This implies that \eqref{stabx} also holds for $k$ being replaced by $k+1$.
Finally, by noticing the fact that
\bq
\label{4e16a}
\sum_{l=0}^{\infty}
\frac{e^{c_0(l+1)(n-1)\Delta t}}{l!}
\leq
e^{c_0(n-1)\Delta t} e^{e^{c_0(n-1)\Delta t}}
\leq
e^{c_0T} e^{e^{c_0T}}, \ \  1\leq  n \leq N,
\eq
the desired result (\ref{stab}) follows from (\ref{stabx}).
\end{proof}

\vskip .25cm
{\bf Remark.} It is seen from (\ref{stabx}) and (\ref{4e16a})
that the generic constant $c$ in (\ref{stab}) has a rapid,
double-exponential growth in $T$. Theoretically, this may
become very large. On the other hand, our numerical experiemnts
(see Section 5) can take $T=100$ without any apparent problem.
It may be possible to obtain a better generic constant
in (\ref{stab}), even by assuming something more on
the kernel function $K(t, s)$.  
We believe that other proof techniques are needed to improve this
double-exponential growth constant, and this is certainly an
interesting theoretical challenge.

\subsection{Convergence}

We now provide an estimate for the fine approximation
operator $\F_n$.

\begin{lem}\label{lem00}
Let $u_n$  be the solution of \eqref{n2e1} and $U_n$  be
the solution of \eqref{n2e4}.
 If  $u \in H^m(I), m\geq 1$, then we have
\bq
\|u_{n}-U_n\|_\infty
\leq
c\left( M^{\frac{3}{4}-m}|u|_{H^{m;M}}+
M^{\frac{1}{2}-m} \|u\|_{\infty}\right), \quad 1\le n\le N,
\eq
provided the number of collocation points $M$ is sufficiently large.
\end{lem}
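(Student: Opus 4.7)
The plan is to derive an error equation for $e_n := u_n - U_n$ on each subinterval and close it with a combination of the continuous integral Gronwall estimate (Lemma 3.2) and the discrete Gronwall inequality (Lemma 3.5). Paralleling the manipulation that leads from (\ref{3eq1}) to (\ref{sta-eq1}) in the stability proof, I would use the Lagrange representation $U_n = \sum_i U_n(\xi_n^i) h_n^i$ together with (\ref{1e5}) to write
\[
U_n(t) = \I_n^M g + \I_n^M\!\!\int_{t_{n-1}}^{t}\!\! K(t,s) U_n(s)\,ds + \sum_{j=1}^{n-1} \I_n^M\!\!\int_{t_{j-1}}^{t_j}\!\! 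K(t,s) U_j(s)\,ds + \sum_{j=1}^{n} J_j,
\]
where $J_j = \sum_i A_j^i h_j^i$ encodes the quadrature errors introduced by (\ref{bK0}) and (\ref{s_l}). Subtracting (\ref{n2e1}), the quantity $e_n$ satisfies a Volterra equation
\[
e_n(t) - \int_{t_{n-1}}^{t} K(t,s) e_n(s)\,ds = F_n(t),
\]
with $F_n$ decomposing into: (a) $(\I - \I_n^M) g$; (b) the interpolation errors $(\I - \I_n^M)\int_{t_{n-1}}^{t} K u_n\,ds$ and the analogue $(\I - \I_n^M)\int_{t_{j-1}}^{t_j} K u_j\,ds$ for $j<n$; (c) the parasitic self-interpolation pieces $(\I - \I_n^M)\int K e_j\,ds$; (d) the previous-interval coupling $\sum_{j<n}\int_{t_{j-1}}^{t_j} K(t,s) e_j(s)\,ds$; and (e) the quadrature residuals $-\sum_{j=1}^{n} J_j$.

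Next, I would bound each component using the tools of Section 3. By (\ref{3e5}), the interpolation errors in (a) and (b) yield the leading contribution $c M^{3/4-m} |u|_{H^{m;M}(I)}$: for $M \ge m-1$ the local seminorm collapses to $\|\partial_t^{m}(\cdot)\|_{L^2(I_n)}$, and a Leibniz expansion of $\partial_t^{m}\!\int\!K u_j$ together with Cauchy--Schwarz across $n$ converts the sum of local seminorms into the global $|u|_{H^{m;M}(I)}$. The self-interpolation terms in (c) have no regularity to exploit, so I would use the crude bound $\|(\I - \I_n^M) w\|_\infty \le (1 + c\sqrt{M})\|w\|_\infty$ via Lemma \ref{lem3e4}, giving $c\sqrt{M}\,\Delta t\,\|e_j\|_\infty$, which is a harmless perturbation for sufficiently large $M$ and small $\Delta t$. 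The quadrature remainders (e) are treated exactly as in (\ref{3e12''})--(\ref{sta-eq14}), but now using the sharper exponent $m$ in (\ref{3e4}) to obtain $\max_i |A_j^i| \le c M^{-m}\|u_j\|_{H^{m;M}}$; after multiplication by the Lebesgue constant $\|\sum_i h_j^i\|_\infty \lesssim \sqrt{M}$ this becomes $\|J_j\|_\infty \le c M^{1/2-m}\|u\|_{H^{m;M}}$, which is the source of the second term $M^{1/2-m}\|u\|_\infty$ in the final estimate (after invoking a stability estimate $\|U_j\|_\infty \le c\|u\|_\infty$ from a variant of Lemma \ref{lem1-1}).

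Inserting these bounds and applying Lemma \ref{lem3} on $I_n$ absorbs the self-coupling piece corresponding to $j=n$ and yields
\[
\|e_n\|_\infty \le c\bigl(M^{3/4-m}|u|_{H^{m;M}} + M^{1/2-m}\|u\|_\infty\bigr) + c\,\Delta t\,(1+\sqrt{M})\sum_{j=1}^{n-1}\|e_j\|_\infty.
\]
A final application of Lemma \ref{lem-gron} over $n=1,\dots,N$ turns the running sum into a constant depending only on $T$ and on $K$, producing the stated bound. I expect the main obstacle to be precisely the treatment of the self-interpolation term in (c): because $e_n$ has no a priori regularity the only available estimate is the Lebesgue-constant bound, which costs a factor $\sqrt{M}$ and is only redeemed by the interval width $\Delta t$. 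This forces the "provided $M$ is sufficiently large" hypothesis and must be handled with care to avoid circular dependence on $\|e_n\|_\infty$ before the Gronwall step. A secondary bookkeeping point is verifying that the sum of local $|u_n|_{H^{m;M}(I_n)}$ seminorms is controlled by the global $|u|_{H^{m;M}(I)}$ seminorm, which follows from the additivity of $L^2$-norms over the disjoint partition.
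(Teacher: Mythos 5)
Your overall framework---error equation at the collocation points, multiplication by the Lagrange basis, splitting into the interpolation error of $u_n$, quadrature residuals $J_j$, interpolation-of-error terms, coupling terms $R_j$, then Gronwall in $t$ on $I_n$ followed by the discrete Gronwall over $n$---is the same as the paper's, and your account of the $M^{\frac34-m}$ and $M^{\frac12-m}$ contributions is essentially the paper's (the paper does not need your auxiliary stability estimate: it simply writes $\|U_n\|_\infty\le\|e_n\|_\infty+\|u_n\|_\infty$ inside the quadrature-error bound and absorbs the $\|e_n\|_\infty$ part for large $M$). The genuine gap is your item (c). You claim the interpolation-of-error pieces ``have no regularity to exploit'' and bound $\|(\I-\I_n^M)\int K e_j\,\d s\|_\infty$ via the Lebesgue constant, obtaining factors $c\sqrt{M}\,\Delta t\,\|e_j\|_\infty$. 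This fails in exactly the regime of the lemma: $N$ (hence $\Delta t$) is fixed and $M$ is taken large, so $\sqrt{M}\,\Delta t$ \emph{grows} with $M$; the $j=n$ term then cannot be absorbed into the left-hand side (your remark that it is ``harmless for sufficiently large $M$'' has the dependence backwards), and for $j<n$ these factors enter the discrete Gronwall inequality, so the resulting constant is of order $\exp(c\sqrt{M}\,T)$ rather than a constant depending only on $T$ and $K$. Since $e^{c\sqrt{M}}M^{\frac34-m}\to\infty$ as $M\to\infty$, the final estimate obtained this way is vacuous, contrary to your assertion that Lemma \ref{lem-gron} yields an $M$-independent constant.

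The missing idea---which is the paper's actual argument, see \eqref{3e22} in the proof of Lemma \ref{lem1-1} and \eqref{con-eq4}--\eqref{con-eq6} here---is that the Volterra integral operator gains one order of regularity from the error: for $w(t)=\int_{t_{n-1}}^{t}K(t,s)e_n(s)\,\d s$ one has $w'(t)=K(t,t)e_n(t)+\int_{t_{n-1}}^{t}\partial_t K(t,s)e_n(s)\,\d s$, so $|w|_{H^{1;M}(I_n)}\lesssim\|e_n\|_\infty$ even though $e_n$ itself is not smooth. Applying \eqref{3e5} with $m=1$ gives $\|(\I-\I_n^M)w\|_\infty\lesssim M^{-\frac14}\|e_n\|_\infty$, absorbable for large $M$ uniformly in $\Delta t$; the $j<n$ pieces are even easier, since there $e_j$ is integrated over all of $I_j$ and the resulting function of $t$ inherits the smoothness of $K$, giving $c\,\Delta t\,\|e_j\|_\infty$ (as in \eqref{con-eq6b}), which keeps the Gronwall constant independent of $M$. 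With the Lebesgue-constant bounds in (c) replaced by these estimates, your argument closes and coincides with the paper's proof.
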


\begin{proof}
Replace $t$ in \eqref{n2e1} by $\zeta ^i_n$.
Subtracting the resulting equation from (\ref{1e5}) gives
\br
&& u_n(\xi_n^i)-U_n(\xi_n^i) =
\left(\bar{K}(\xi_n^i,s_n^i), u_n(s_n^i)\right) \\
&& \;\;\; -\left(\bar{K}(\xi_n^i,s_n^i), U_n(s_n^i)\right)_{ M}
+\frac{\Delta t}{2} \sum^{n-1}_{j=1}
\Big[ \left(K(\xi_n^i, s_{j}), u_{j}(s_{j})\right)-
\left(K(\xi_n^i, s_{j}), U_{j}(s_{j})\right)_{M}\Big].
\nn \er
It can be further re-organized as
\[
 u_n(\xi_n^i)-U_n(\xi_n^i) =
\left(\bar{K}(\xi_n^i,s_n^i), e_n(s_n^i)\right) + A_n^i
 +\frac{\Delta t}{2}\sum^{n-1}_{j=1} \Big[
\left(K(\xi_n^i, s_{j}),
e_{j}(s_{j})\right) +A^i_{j} \Big],
\]
where $e_j := u_j - U_j$,
\br
&& A_n^i=
\left(\bar{K}(\xi_n^i,s_n^i), U_n(s_n^i)\right)
-\left(\bar{K}(\xi_n^i,s_n^i), U_n(s_n^i)\right)_{ M},\nn
\\
&& A_j^i=
\frac{\Delta t}{2}\left(K(\xi_n^i,s_{j}), U_j(s_{j})\right)
-\frac{\Delta t}{2}\left(K(\xi_n^i,s_{j}), U_j(s_{j})\right)_{M},\quad  1\leq j\leq n-1.\nn
\er
Following the same procedure as in the proof of Lemma 4.1
gives
\begin{eqnarray*}
\I_n^M u_n-U_n
=
\I_n^M\left(\bar{K}(\xi_n^i,s_n^i), e_n(s_n^i)\right)
+\sum_{i=0}^{M}A_n^ih_n^i
+ \sum^{n-1}_{j=1} \Big[
\frac{\Delta t}{2}\I_n^M\left(K(\xi_n^i, s_{j}), e_{j}(s_{j})\right)
+\sum_{i=0}^{M}A_{j}^ih_{j}^i\Big].
\end{eqnarray*}
Consequently,
\br
u_n-U_n
=
\int_{t_{n-1}}^{\xi_n^i}K(\xi_n^i,s) e_n(s) \d s
+u_n-\I_n^M u_n+
\sum_{j=1}^{n} J_j+ \sum_{j=1}^{n}I_j+\sum_{j=1}^{n-1}R_j\nn,
\er
where
\ben
&& J_j =
\sum\limits_{i=0}^{M}A_j^ih_j^i, \quad 1\leq j \leq n;
\quad
R_j =
\frac{\Delta t}{2}\left(K(\xi_n^i, s_{j}), e_j\right), \quad 1\leq j \leq n-1,
\\
&& I_n =
\I_n^M\left(\bar{K}(\xi_n^i,s_n^i), e_n(s_n^i)\right)
-
\left(\bar{K}(\xi_n^i,s_n^i), e_n(s_n^i)\right),  \\
&& I_j=\frac{\Delta t}{2}\I_n^M\left(K(\xi_n^i,s_{j}), e_j(s_{j})\right)
- \frac{\Delta t}{2}\left(K(\xi_n^i,s_{j}), e_j(s_{j})\right),
\quad 1\leq j \leq n-1.
\een
Applying the Gronwall Lemma \ref{lem-gron} gives
\br
\|u_{n}-U_n\|_\infty
\leq
c\left(\left\|\I_n^M u_n-u_n\right\|_\infty+
\sum_{j=1}^{n} \|J_j\|_\infty+ \sum_{j=1}^{n}\|I_j\|_\infty
+ \sum_{j=1}^{n-1}\|R_j\|_\infty\right).\label{con-eq17}
\er
We now estimate the right-hand side of  \eqref{con-eq17} term by term.
First, it follows from the inequality \eqref{3e5} that
\br \label{3e18'}
\left\|\I_n^M u_n-u_n\right\|_\infty
\leq
 cM^{\frac{3}{4}-m} |u_n|_{H^{m;M}}.
\er
Then by using a similar technique as in the proof of Lemma \ref{lem1-1},
we can estimate  $\|I_j\|_\infty$ and $\|J_j\|_\infty$
as follows. For $j=n$ and sufficiently large $M$,
\br\label{con-eq18}
&& \|J_n\|_\infty =
\left\| \sum\limits_{i=0}^{M}A_n^ih_n^i\right\|_\infty
\!
\leq c_1\Delta tM^{\frac{1}{2}-m}
 \Big(\|e_n\|_{\infty}+\|u_n\|_{\infty}\Big) \leq
\frac{1}{3c}\|e_n\|_\infty+c\Delta tM^{\frac{1}{2}-m}\|u_n\|_{\infty},
\er
where, as in Lemma \ref{lem1-1}, $c_1$, and therefore $c$, depend on $|K(\xi_n^i,s_n^i(\cdot))|_{H^{m;M}}$.
Similarly, we have
\br\label{con-eq4}
\|I_n\|_\infty
=
\left\| \I_n^M\left(\bar{K}(\xi_n^i,s_n^i), e_n(s_n^i)\right)
-
\left(\bar{K}(\xi_n^i,s_n^i), e_n(s_n^i)\right)\right\|_\infty
\leq
\frac{1}{3c}\|e_n\|_\infty.
\er
For $1 \leq j \leq n-1$, we can further have
\bsub \label{con-eq6}
\begin{align}
& \|J_j\|_\infty =
\left\|\sum\limits_{i=0}^{M}A_j^ih_j^i\right\|_\infty
\leq
c\Delta tM^{\frac{1}{2}-m}
\Big(\|e_j\|_{\infty}+\|u_j\|_{\infty}\Big),
\label{con-eq6a} \\
& \|I_j\|_\infty
= \frac{\Delta t}{2}\left\|\I_n^M\left(K(\xi_n^i,s_{j}), e_j(s_{j})\right)
- \left(K(\xi_n^i,s_{j}), e_j(s_{j})\right)\right\|_\infty
\leq
 c\Delta t\|e_j\|_\infty,
\label{con-eq6b} \\
&\|R_j\|_\infty=\frac{\Delta t}{2}\left\|\left(K(t, s_{j}), e_j(s_j)\right)\right\|_\infty
\leq
c\Delta t\|e_{j}\|_{\infty}.
\label{con-eq6c}
\end{align}
\esub
Combining \eqref{con-eq17}-\eqref{con-eq6} yields
\br
\|e_n\|_\infty
&\leq& c \Big(M^{\frac{3}{4}-m} |u_n|_{H^{m;M}}+\frac{2}{3c}\|e_n\|_\infty
+\Delta tM^{\frac{1}{2}-m}  \|u_{n}\|_\infty \Big)\nn\\
&& \;\; + c\Delta tM^{\frac{1}{2}-m} \sum^{n-1}_{j=1}
\|u_{j}\|_{\infty}
+c\Delta t \sum^{n-1}_{j=1} \|e_{j}\|_\infty.
\nn \er
Using the discrete Gronwall Lemma \ref{lem-gron} gives
\br
\|e_n\|_\infty
\leq
c\left( M^{\frac{3}{4}-m}|u|_{H^{m;M}}+
n\Delta tM^{\frac{1}{2}-m} \|u\|_{\infty}\right)e^{c(n-1)\Delta t}.\label{eq-7}
\er
Finally the lemma is proved by observing
that $n\Delta t$ and $e^{c(n-1)\Delta t}$
can be bounded by a constant depending on $T$.
\end{proof}

By following the same lines as in the proof of Lemma \ref{lem1-1},
we can establish the continuity of the approximation operators
$\F_n$ and  $\G_n$.

\begin{lem}\label{lem1}
For $1\leq n\leq N$, both operators $\F_n$ and  $\G_n$ are continuous,
i.e.,  for any two polynomial
sequences $\{\psi_i\}_{i=1}^{n-1}$
and $\{\varphi_i\}_{i=1}^{n-1}$,  we have
\br
&&\|\F_n (t;\psi_{1}, \cdots, \psi_{n-1}) -
\F_n (t;\varphi_{1}, \cdots, \varphi_{n-1})\|_\infty
\leq  c\Delta t \sum ^{n-1} _{j=1} \|\psi
_{j}- \varphi_{j}\|_\infty;
 \\
&&\|\G_n (t;\psi_{1}, \cdots, \psi_{n-1}) -
\G_n (t,\varphi_{1}, \cdots, \varphi_{n-1})\|_\infty
\leq c\Delta t \sum^{n-1}_{j=1} \|\psi_{j}- \varphi_{j}\|_\infty.
\er
\end{lem}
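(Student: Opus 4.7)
The plan is to exploit the linearity of the spectral collocation scheme \eqref{1e5} (and likewise \eqref{1e7}) in both the forcing $g$ and the prior-interval data $\{U_j\}_{j=1}^{n-1}$. Granting this, the difference
\begin{equation*}
D_n:=\F_n(t;\psi_1,\ldots,\psi_{n-1})-\F_n(t;\varphi_1,\ldots,\varphi_{n-1})\in\P_M(I_n)
\end{equation*}
coincides with $\F_n(t;\psi_1-\varphi_1,\ldots,\psi_{n-1}-\varphi_{n-1})$ computed with zero forcing, and the stability estimate \eqref{sta-eq20} of Lemma \ref{lem1-1}, applied with $g_M\equiv 0$, immediately yields
\begin{equation*}
\|D_n\|_\infty\le c\Delta t\sum_{j=1}^{n-1}\|\psi_j-\varphi_j\|_\infty,
\end{equation*}
which is the first asserted inequality. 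The second inequality for $\G_n$ follows identically, with $M$ replaced by $\tilde M$ and \eqref{sta-eq21} used in place of \eqref{sta-eq20}.

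Concretely, the steps I would carry out are: first, write \eqref{1e5} once for $\Psi_n:=\F_n(t;\psi_1,\ldots,\psi_{n-1})$ and once for $\Phi_n:=\F_n(t;\varphi_1,\ldots,\varphi_{n-1})$; observe that in both identities the $U_n$-dependent and data-dependent terms enter linearly through the discrete inner product $(\cdot,\cdot)_M$, whereas $g(\xi_n^i)$ appears only as an additive forcing on the right-hand side; then subtract to cancel the $g(\xi_n^i)$ contribution, producing the zero-forcing collocation system satisfied by $D_n$. Uniqueness of the discrete solution for sufficiently large $M$ (implicit in the well-posedness underlying Lemma \ref{lem1-1}) then identifies $D_n$ with $\F_n$ applied to the data differences, after which Lemma \ref{lem1-1} is invoked verbatim.

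The main analytic work is already contained in Lemma \ref{lem1-1}, so no new estimate is needed here; the only obstacle is essentially bookkeeping. Two points deserve care: (i) that $g$ enters \eqref{1e5} linearly and purely as a right-hand-side term, so that it truly cancels in the subtraction; and (ii) that the constant $c$ here is the same generic constant as in Lemma \ref{lem1-1} and is independent of $\psi$ and $\varphi$. If one preferred not to invoke the linearity argument abstractly, one could re-run the proof of Lemma \ref{lem1-1} with $e_n:=\Psi_n-\Phi_n$ in place of $\psi_n$ and the data differences $\psi_j-\varphi_j$ on the right-hand side, using \eqref{3e4}, Lemma \ref{lem3e4}, and \eqref{3e5} in exactly the same way; but this would merely duplicate the earlier calculation.
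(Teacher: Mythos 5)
Your proposal is correct and takes essentially the same route as the paper, whose entire proof of Lemma~\ref{lem1} is the remark that one follows the same lines as Lemma~\ref{lem1-1}: by linearity of the collocation systems \eqref{1e5} and \eqref{1e7} in $g$ and in the prior-interval data, the difference solves the zero-forcing system for the data differences, and the stability machinery of Lemma~\ref{lem1-1} gives the bound. The only caveat, which you already flag, is that invoking \eqref{sta-eq20} with $g_M\equiv 0$ uses the proof of Lemma~\ref{lem1-1} for an arbitrary right-hand side rather than its literal statement (which fixes the problem's $g$); since that proof carries over verbatim with $g$ replaced by $0$, this is harmless.
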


We now define an auxiliary operator $\tilde\E_n$. For a
sequence  $\{\psi_i\}_{i=1}^{n-1}$,  we define
$\tilde\E_n(t;\psi_{1}, \psi_2, \cdots, \psi_{n-1})$
as the function $\psi_n$,
which is the solution of the following problem:
\bq\label{con-eq31}
\psi_n(t)- \ds\int_{t_{n-1}}^tK(t,s) \psi_n(s) \d s
=g(t) +\frac{\Delta t}{2} \sum^{n-1}_{j=1}
\left(K(t,s_{j}), \psi_{j}(s_{j})\right)_{M}.
\eq

\begin{lem}\label{lem2}
For $1\leq n\leq N$,  let 
$\delta\F_n=\tilde\E_n-\F_n $, $\delta\G_n=\tilde\E_n-\G_n$. Then they are continuous in the
sense that they satisfy, for any two
sequences $\{\psi_i\}_{i=1}^{n-1}$ and $\{\varphi_i\}_{i=1}^{n-1}$,
\br
&&\left|\delta \F_n (t;\psi_{1}, \cdots, \psi_{n-1})
- \delta \F_n (t;\varphi_{1}, \cdots, \varphi_{n-1})\right|
\leq c \Delta t 2^m M^{\frac{3}{4}-m} \sum^{n-1}_{j=1} \|\psi
_{j}- \varphi_{j}\|_\infty ; \\
&&|\delta \G_n (t; \psi_{1},  \cdots, \psi_{n-1})
- \delta \G_n (t; \varphi_{1}, \cdots, \varphi_{n-1})|\leq
c \Delta t 2^m \tilde{M}^{\frac{3}{4}-m} \sum^{n-1}_{j=1} \|\psi
_{j}- \varphi_{j}\|_\infty ,
\er
where $c$ depends on
$\max_{s\in I}\left\|K(\cdot, s)\right\|_{W^{m,\infty}}$,
$m\ge 1$.
\end{lem}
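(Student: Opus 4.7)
The plan is to exploit the linearity of both $\tilde\E_n$ and $\F_n$ in their inputs and to recognize $\F_n$ as the spectral-collocation-plus-quadrature approximation of the very equation \eqref{con-eq31} that defines $\tilde\E_n$. Writing $\F_n(\psi)$ as shorthand for $\F_n(t;\psi_1,\ldots,\psi_{n-1})$ and similarly for $\tilde\E_n$, I introduce $w := \tilde\E_n(\psi) - \tilde\E_n(\varphi)$ and $W := \F_n(\psi) - \F_n(\varphi)$, so that the quantity to estimate is exactly $w - W$. By linearity, $w$ solves on $I_n$ the Volterra equation with vanishing $g$ and inhomogeneity
$$v(t) := \frac{\Delta t}{2}\sum_{j=1}^{n-1}\bigl(K(t,s_j),(\psi_j - \varphi_j)(s_j)\bigr)_M,$$
while $W$ is the fully discrete collocation approximation of this same equation.

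Next I would repeat the proof of Lemma \ref{lem00} applied to the pair $(w,W)$, with $v$ playing the role of $g$. The same manipulations---decomposing the quadrature errors into $A_j^i$ terms, invoking Lemma \ref{lem3e4} for the Lebesgue constant, and using the interpolation estimates \eqref{3e5} and \eqref{3e4} together with the discrete Gronwall Lemma \ref{lem-gron}---deliver
$$\|w - W\|_\infty \leq c\,M^{3/4-m}|w|_{H^{m;M}(I_n)} + c\,M^{1/2-m}\|w\|_\infty \leq c\,M^{3/4-m}\|w\|_{W^{m,\infty}(I_n)},$$
where in the last inequality I absorb the subdominant $M^{1/2-m}$ term (since $3/4>1/2$) and use the bound $|w|_{H^{m;M}(I_n)} \leq (\Delta t)^{1/2}\|w^{(m)}\|_{\infty,I_n}$, with the $(\Delta t)^{1/2}$ factor swept into $c$.

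To close the argument, I would bound $\|w\|_{W^{m,\infty}(I_n)}$ via Lemma \ref{lem3}, whose resolvent-kernel proof adapts to the subinterval $[t_{n-1},t_n]$ through a linear change of variable and produces $\|w\|_{W^{m,\infty}(I_n)} \leq c\,2^m\|v\|_{W^{m,\infty}(I_n)}$ with constant independent of $n$. Since every quadrature node $s_j(x)$ is independent of $t$, each $t$-derivative lands only on $K(t,s_j(x))$; combined with $\sum_i|\omega_i|=2$ this yields $\|v\|_{W^{m,\infty}(I_n)} \leq c\,\Delta t\sum_{j=1}^{n-1}\|\psi_j - \varphi_j\|_\infty$, with $c$ depending on $\max_{s\in I}\|K(\cdot,s)\|_{W^{m,\infty}}$. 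Chaining the three estimates produces the stated bound for $\delta\F_n$. The bound for $\delta\G_n$ follows from the same proof with every $M$ replaced by $\tilde M$, since $\G_n$ is built by the identical recipe at the coarser polynomial degree.

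The main technical difficulty, I expect, is verifying that the collocation error analysis from Lemma \ref{lem00} transfers cleanly to the inhomogeneity $v$---in particular that no spurious $\|v\|_\infty$ term contaminates the leading $M^{3/4-m}$ factor---and carefully tracking the explicit powers of $\Delta t$ along the chain $v \to w \to (w - W)$ without losing the single linear $\Delta t$ stated in the lemma.
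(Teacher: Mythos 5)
Your proposal is correct and follows essentially the same route as the paper: both identify $\Delta_n=(\tilde\E_n(\psi)-\tilde\E_n(\varphi))-(\F_n(\psi)-\F_n(\varphi))$ as a continuous-versus-collocation error for a Volterra problem whose inhomogeneity $v=\frac{\Delta t}{2}\sum_j\big(K(t,s_j),(\psi_j-\varphi_j)(s_j)\big)_M$ is common to both, then combine the Gronwall/interpolation/quadrature machinery of Lemmas \ref{lem3e4}--\ref{lem-gron} with Lemma \ref{lem3} and the bound $\|v\|_{W^{m,\infty}}\lesssim \Delta t\sum_j\|\psi_j-\varphi_j\|_\infty$ to obtain the factor $c\,\Delta t\,2^m M^{\frac34-m}$ (and $\tilde M$ for $\G_n$). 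The only cosmetic difference is that you invoke the proof of Lemma \ref{lem00} as a template for the pair $(w,W)$ and absorb the quadrature term via $\|w\|_\infty$, whereas the paper redoes that computation explicitly and controls the corresponding term through the continuity Lemma \ref{lem1}; the two are equivalent.
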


\begin{proof}
Similar to the proof of  Lemma \ref{lem1-1}, we only need to prove
the first inequality above.
Let $q_n(t)=\F_n (t;\psi_{1},\cdots, \psi_{n-1}),
p_n(t)= \F_n (t;\varphi_{1}, \cdots, \varphi_{n-1}),
v_n(t)=
 \tilde\E_n (t;\psi_{1},\cdots, \psi_{n-1}),
w_n(t)= \tilde\E_n (t; \varphi_{1}, \cdots, \varphi_{n-1}).$
Then
\bq
\delta \F_n (t;\psi_{1}, \cdots, \psi_{n-1})-
\delta \F_n (t;\varphi_{1},\cdots, \varphi_{n-1})
= (v_n-q_n)-(w_n-p_n).
\eq
Let $e_l=\psi_{l}-\varphi_{l}$. From  the
definitions \eqref{con-eq31} and \eqref{1e5}, we can verify that
\br
&&(v_n-w_n)(\xi_n^i)-(q_n-p_n)(\xi_n^i)\label{con-eq32} \\
&=&
\left(\bar{K}(\xi_n^i,s_n^i), (v_n-w_n)(s_n^i)\right)
-\left(\bar{K}(\xi_n^i,s_n^i), (q_n-p_n)(s_n^i)\right)_{ M}.
\nn
\er
Let $\Delta_n=(v_n-q_n)-(w_n-p_n)$. By multiplying both sides of
equation  \eqref{con-eq32} by $h_n^i$ and summing up from $i=0$ to $i=M$, we obtain
\br\label{con-eq28}
\I_n^M (v_n-w_n)-(q_n-p_n)
=
\I_n^M\left(\bar{K}(t,s_n^i), \Delta_n(s_n^i)\right)
+\sum_{i=0}^{M}A_n^ih_n^i,
\nn
\er
where
\bq
A_n^i
=
\left(\bar{K}(\xi_n^i,s_n^i), (q_n-p_n)(s_n^i)\right)
-\left(\bar{K}(\xi_n^i,s_n^i), (q_n-p_n)(s_n^i)\right)_{ M}.\nn
\eq
Consequently,
\br
\Delta_n(t)
=
\int_{t_{n-1}}^t K(t,s) \Delta_n(s)\d s+\I_n^M (v_n-w_n)-(v_n-w_n)
+J_n+I_n,\label{con-eq9}
\er
where
\bq
J_n =
\sum\limits_{i=0}^{M}A_n^ih_n^i,
\quad
I_n =
\I_n^M\left(\bar{K}(t,s_n^i), \Delta_n(s_n^i)\right) -
\left(\bar{K}(t,s_n^i), \Delta_n(s_n^i)\right).
\nn \eq
Applying the standard Gronwall inequality to \eqref{con-eq9} gives
\bq
\|\Delta_n\|_\infty
\leq
c \Big( \left\|\I_n^M (v_n-w_n)-(v_n-w_n)\right\|_\infty
+\|J_n\|_\infty+\|I_n\|_\infty\Big).
\label{sta-eq23}
\eq
It follows from the inequality \eqref{3e5} that
\br
\left\|\I_n^M (v_n-w_n)-(v_n-w_n)\right\|_\infty
\leq
 c M^{\frac{3}{4}-m}
|v_n-w_n|_{H^{m;M}}
\leq
c M^{\frac{3}{4}-m}
\|v_n-w_n\|_{W^{m,\infty}}.
\nn 
\er
Note that
\bq
v_n-w_n=
\left(\bar{K}(t, s_{n}), (v_n-w_n)(s_{n})\right) +
\frac{\Delta t}{2} \sum_{j=1}^{n-1}
\left(K(t, s_{j}), e_{j}(s_{j})\right)_{M}.
\eq
Consequently, it follows from Lemma \ref{lem3} that
\br
\label{sta-eq22}
&&\|v_n-w_n\|_{W^{m,\infty}}\leq
c 2^m \sum ^{n-1}_{j=1} \|\left(K(t, s_{j}),
e_{j}(s_{j})\right)_M\|_{W^{m,\infty}}  \\
&\leq& c 2^m \Delta t \max_{s\in I}\left\|K(\cdot, s)
\right\|_{W^{m,\infty}}
\sum^{n-1}_{j=1}\|e_{j}\|_\infty
\leq
c 2^m \Delta t \sum^{n-1}_{j=1}\|e_{j}\|_\infty.
\nn \er
Using Lemma \ref{lem1} gives
\bq
\|J_n\|_\infty =
\left\| \sum\limits_{i=0}^{M}A_n^ih_n^i\right\|_\infty
\leq
 c\Delta tM^{\frac{1}{2}-m}
 \|q_n-p_n\|_{\infty} \leq  c
 \Delta t^{2}M^{\frac{1}{2}-m} \sum^{n-1}_{j=1}\|e_{j}\|_\infty.
\eq
Moreover, using the same technique as used in the proof og
Lemma \ref{lem1-1} yields
\br\label{con-eq29}
\|I_n\|_\infty
=
\left\| \I_n^M\left(\bar{K}(t,s_n^i), \Delta_n(s_n^i)\right)
-
\left(\bar{K}(t,s_n^i), \Delta_n(s_n^i)\right)\right\|_\infty
\leq
\frac{1}{3c} \|\Delta_n\|_\infty.
\er
Combining \eqref{sta-eq23}-\eqref{con-eq29}, we conclude that
\[
\|\Delta_n\|_\infty
\leq
c\Delta t 2^m M^{\frac{3}{4}-m} \sum^{n-1}_{j=1}\|e_{j}\|_\infty.
\]
The proof is then complete.
\end{proof}

\begin{thm}\label{th1} (Convergence)
For $1\leq n\leq N$,  let $u_n$  be the solution of  \eqref{n2e1}
and $U_n^k$  be the solution of iteration scheme
\eqref{n2e5}-(\ref{n2e5a}).
 If
 $\max_{s\in I}\left\|K(\cdot, s)\right\|_{W^{m,\infty}}\le c$,
 $u \in H^m(I), m\geq 1$, then
 \bq \label{conv}
\|u_n-U_n^k\|_\infty \leq c \Big( M^{\frac{3}{4}-m} + (\tilde
M/2)^{(\frac{3}{4}-m)(k+1)}\Big)\Big(|u|_{H^{m;M}}+\|u\|_\infty\Big),
\;\;\; k=0,1,\cdots ,
\eq
where
$c$ depends on $T$ and
$\max_{s\in I}\left\|K(\cdot, s)\right\|_{W^{m,\infty}}$.
\end{thm}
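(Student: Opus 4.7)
I would first split the total error by the triangle inequality,
\begin{equation*}
\|u_n - U_n^k\|_\infty \le \|u_n - U_n\|_\infty + \|U_n - U_n^k\|_\infty,
\end{equation*}
where $U_n = \F_n(t; U_1, \ldots, U_{n-1})$ is the fully coupled fine collocation solution defined by \eqref{1e5}. The first term is handled directly by Lemma \ref{lem00}, which yields the $M^{3/4 - m}(|u|_{H^{m;M}} + \|u\|_\infty)$ contribution in the stated bound (absorbing the $M^{1/2-m}$ factor into $M^{3/4-m}$). All remaining work is in bounding the iteration error $\epsilon_n^k := \|U_n - U_n^k\|_\infty$, which must carry the factor $(\tilde M/2)^{(3/4-m)(k+1)}$.

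To estimate $\epsilon_n^k$, since $U_n = \F_n(U)$ with $U := (U_1, \ldots, U_{n-1})$, I would use the algebraic identity
\begin{equation*}
U_n - U_n^k = \bigl[\F_n(U) - \F_n(U^{k-1})\bigr] - \bigl[\G_n(U^k) - \G_n(U^{k-1})\bigr].
\end{equation*}
A naive application of Lemma \ref{lem1} here only yields Lipschitz constants of order $c\Delta t$ and produces no $k$-dependent gain, which is too weak. The crucial trick is to bring in the auxiliary operator $\tilde\E_n$ from \eqref{con-eq31}: writing $\F_n = \tilde\E_n - \delta\F_n$ and $\G_n = \tilde\E_n - \delta\G_n$ rearranges the identity to
\begin{equation*}
U_n - U_n^k = \bigl[\tilde\E_n(U) - \tilde\E_n(U^k)\bigr] - \bigl[\delta\F_n(U) - \delta\F_n(U^{k-1})\bigr] + \bigl[\delta\G_n(U^k) - \delta\G_n(U^{k-1})\bigr].
\end{equation*}
Lemma \ref{lem2} now supplies the decisive small Lipschitz constants $c\Delta t\, 2^m M^{3/4-m}$ and $c\Delta t\, 2^m \tilde M^{3/4-m}$ on the two $\delta$-terms, while $\tilde\E_n$ is Lipschitz with constant $c\Delta t$ by a direct Gronwall argument on its defining Volterra equation. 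Using $M \ge \tilde M$ (so $M^{3/4-m} \le \tilde M^{3/4-m}$) and $\|U^k - U^{k-1}\| \le \epsilon^k + \epsilon^{k-1}$, the combined inequality reduces to
\begin{equation*}
\epsilon_n^k \le c\Delta t \sum_{j=1}^{n-1} \epsilon_j^k + c\,\Delta t\, 2^m \tilde M^{3/4-m} \sum_{j=1}^{n-1} \epsilon_j^{k-1}.
\end{equation*}

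I would close the argument by induction on $k$. The base case $k=0$ starts from $U_n^0 = \G_n(U^0)$ and applies Lemma \ref{lem00} both to $U_n$ (degree $M$) and to the purely sequential coarse scheme $U_n^0$ (degree $\tilde M$), giving $\epsilon_n^0 \le c\,\tilde M^{3/4-m}(|u|_{H^{m;M}} + \|u\|_\infty)$, which is of the form $C_0 \beta$ with $\beta \sim 2^m \tilde M^{3/4-m} \sim (\tilde M/2)^{3/4-m}$. For the induction step, assuming $\epsilon_j^{k-1} \le C_0 \beta^{k}$ uniformly in $j$, I would plug this into the inequality above and apply the discrete Gronwall Lemma \ref{lem-gron} in $n$ to the resulting recursion, producing $\epsilon_n^k \le c\, T\, e^{cT}\, \beta \cdot C_0 \beta^k = C_0 \beta^{k+1}$ after suitably enlarging $C_0$.

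The step I expect to be the main obstacle is controlling how the Gronwall constants accumulate with $k$: each iteration spawns an additional $e^{cT}$-type factor from the discrete Gronwall in $n$, and one must verify that these factors are tame enough to be absorbed into the single constant $c$ in \eqref{conv} so that only the geometric decay $\beta^{k+1}$ survives. The saving grace is that each iteration also introduces one fresh factor $\beta = (\tilde M/2)^{3/4-m}$, which is small once $\tilde M$ is large, yielding genuine contraction. Combining the resulting bound $\|U_n - U_n^k\|_\infty \le c\,(\tilde M/2)^{(3/4-m)(k+1)}(|u|_{H^{m;M}} + \|u\|_\infty)$ with the Lemma \ref{lem00} bound on $\|u_n - U_n\|_\infty$ then delivers \eqref{conv}.
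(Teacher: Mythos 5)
Your plan coincides with the paper's proof up to and including the key recursion: the triangle inequality through the fine solution $U_n$, the use of Lemma \ref{lem00} for $\|u_n-U_n\|_\infty$ and for the coarse base case, and the introduction of $\tilde\E_n$ so that Lemma \ref{lem2} supplies the small Lipschitz constants $c\Delta t\,2^m M^{\frac34-m}$, $c\Delta t\,2^m\tilde M^{\frac34-m}$ (the paper writes the algebraic identity with differences against $U$ rather than against $U^{k-1}$, so it uses Lemma \ref{lem1} for $\G_n$ instead of a Lipschitz bound for $\tilde\E_n$, but both routes land on the same two-level inequality $\epsilon_n^k \le c\Delta t\sum_{j<n}\epsilon_j^k + c\varepsilon\Delta t\sum_{j<n}\epsilon_j^{k-1}$ with $\varepsilon=2^m\tilde M^{\frac34-m}$; your extra claim that $\tilde\E_n$ is Lipschitz with constant $c\Delta t$ is correct and easy).

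The genuine gap is exactly the point you flag and then wave away with ``after suitably enlarging $C_0$.'' With the flat induction hypothesis $\epsilon_j^{k-1}\le C_0\beta^{k}$ uniform in $j$, one Gronwall pass in $n$ gives $\epsilon_n^{k}\le (cTe^{cT})\,C_0\beta^{k+1}$, so the constant must be enlarged by the factor $cTe^{cT}>1$ at every iteration; unwinding the induction you only get $\epsilon_n^k\le C_0\,(cTe^{cT})^{k}\beta^{k+1}$, i.e.\ a bound of the form $c\,(c_1\tilde M^{\frac34-m})^{k+1}$ with $c_1$ depending on $T$. That is weaker than \eqref{conv}, which asserts the factor $(\tilde M/2)^{(\frac34-m)(k+1)}$ with a single constant $c$ independent of $k$; your ``saving grace'' (smallness of $\beta$ for large $\tilde M$) gives contraction but not the stated rate. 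The paper closes this by strengthening the induction hypothesis to carry the $n$-dependence explicitly, $\epsilon_n^k\le \frac{c}{k!}\,e^{c(k+1)(n-1)\Delta t}\,\varepsilon^{k+1}\big(|u|_{H^{m;M}}+\|u\|_\infty\big)$, and exploiting the elementary gain
\begin{equation*}
\Delta t\sum_{j=1}^{n-1} e^{c(k+1)(j-1)\Delta t}
\;\le\; \Delta t\,\frac{e^{c(k+1)(n-1)\Delta t}-1}{e^{c(k+1)\Delta t}-1}
\;\le\; \frac{1}{c(k+1)}\,e^{c(k+1)(n-1)\Delta t},
\end{equation*}
which produces one factor $1/(k+1)$ per iteration instead of a fresh $e^{cT}$; the resulting quantity $e^{c(k+1)(n-1)\Delta t}/k!\le e^{cT}e^{e^{cT}}$ is bounded uniformly in $k$, yielding \eqref{conv} with $c$ depending (double-exponentially) on $T$ but not on $k$. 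Without this refined hypothesis --- or some equivalent device that prevents the geometric accumulation of Gronwall constants in $k$ --- your induction does not deliver the theorem as stated.
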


\begin{proof}
We first prove that (\ref{conv}) is true for $k=0$.
By construction, we have 
$U^0_n = \G_n(t; U^0_1, \cdots, U^0_{n-1})$.  
Applying Lemma \ref{lem00} to the above coarse solution gives
\br
\|u_{n}-U^0_n\|_\infty
\leq
c_1\left( {\tilde M}^{\frac{3}{4}-m}|u|_{H^{m;M}}+
{\tilde M}^{\frac{1}{2}-m} \|u\|_{\infty}\right)
&\leq&
c_2  {\tilde M}^{\frac{3}{4}-m} \Big(|u|_{H^{m;M}}+ \|u\|_{\infty}\Big)
\nn\\
&\leq&
c  ({\tilde M/2})^{\frac{3}{4}-m} \Big(|u|_{H^{m;M}}+ \|u\|_{\infty}\Big).
\label{te1}
\er
This proves  (\ref{conv}) for $k=0$.
We now prove (\ref{conv}) for $k\ge 1$.
It follows from \eqref{n2e4} and \eqref{n2e5}
that
\br
\left|U_n^k- U_n\right|
&\leq&
\left|\G_n \left(t;U_{1}^k, \cdots, U_{n-1}^k\right) -
\G_n \left(t,U_{1},  \cdots, U_{n-1}\right)\right|\nn\\
&&\;\; + \left|\delta \G_n \left(t; U_{1}^{k-1}, \cdots, U
_{n-1}^{k-1}\right)
-\delta \G_n (t,U_{1}, \cdots, U_{n-1})\right|
\nn \\
&& \;\; + \left|\delta \F_n \left(t,U_{1}^{k-1},\cdots, U_{n-1}^{k-1}\right)
- \delta \F_n (t,U_{1}, \cdots, U_{n-1})\right|,\nn
\er
where $\delta\F_n=\tilde\E_n-\F_n$ and $\delta\G_n=\tilde\E_n-\G_n$
with $\tilde\E_n$ defined by \eqref{con-eq31}.
Using Lemmas \ref{lem1} and \ref{lem2} gives
\bq \label{xx1}
\|U_n^k- U_{n}\|_\infty \leq
c \Delta t \sum^{n-1}_{j=1} \|U_{j}^k- U_{j}\|_\infty
+c\varepsilon \Delta t \sum^{n-1}_{j=1}\|U_{j}^{k-1}- U_{j}\|_\infty,
\eq
where $\varepsilon=2^m \tilde M^{\frac{3}{4}-m}$,
and we have used the fact that
$M^{\frac{3}{4}-m}\le \tilde M^{\frac{3}{4}-m}$.
Next we will derive the following inequality
\bq
\|U_n^k-
U_n\|_\infty \leq \frac{c}{k!} e^{c(k+1)(n-1)\Delta t}
\varepsilon^{k+1}(|u|_{H^{m;M}}+\|u\|_{\infty}), \ \forall k\ge 1,
\label{eq-8}
\eq
where $c$ is a constant independent of $k$. We do this by induction.
For $k=1$, using Gronwall inequality to \eqref{xx1} gives
\br
\|U_n^1- U_n\|_\infty 
&\leq& 
c\Delta t \varepsilon e^{c(n-1)\Delta t}\sum_{j=1}^{n-1} \|U_{j}^{0} - U_{j}\|_\infty \nn \\
&\leq &
c\Delta t \varepsilon e^{c(n-1)\Delta t}\sum_{j=1}^{n-1} 
(\|U_{j}^{0} - u_{j}\|_\infty + \|u_{j} - U_{j}\|_\infty). \nn 
\er
Using \eqref{te1} and \eqref{eq-7} yields
\br
\|U_n^1- U_n\|_\infty 
&\leq&
c^2\Delta t\varepsilon^2 e^{c(n-1)\Delta t}\sum_{j=1}^{n-1}e^{c(j-1)\Delta t} (|u|_{H^{m;M}}+\|u\|_{\infty})\nn\\
&\leq&
c^2\Delta t \varepsilon^2 e^{c(n-1)\Delta t} \frac{e^{c(n-1)\Delta t}-1}{e^{c \Delta t}-1}(|u|_{H^{m;M}}+\|u\|_{\infty}) \nn\\
&\leq& c e^{2c(n-1)\Delta t}\varepsilon^2(|u|_{H^{m;M}}+\|u\|_{\infty}).\nn \er
This means \eqref{eq-8} holds for $k=1$.
Now assuming \eqref{eq-8} holds for a given $k$,
we want to prove that it also holds for $k+1$.
Again, replacing $k$ in (\ref{xx1}) by $k+1$ and using Gronwall
inequality for the resulting inequality yield
\bq
\|U_n^{k+1}- U_n\|_\infty \leq
c\Delta t \varepsilon e^{c(n-1)\Delta t}
\sum^{n-1}_{j=1}  \|U_{j}^{k} - U_{j}\|_\infty.
\eq
Using the induction assumption for index $k$, i.e., (\ref{eq-8}), gives
\br
\|U_n^{k+1}- U_n\|_\infty
&\leq&
\frac{c^2}{k!} \Delta t\varepsilon e^{c(n-1)\Delta t}\sum_{i=1}^{n-1}
\varepsilon^{k+1} e^{c(k+1)(i-1)\Delta t}(|u|_{H^{m;M}}+\|u\|_{\infty}) \nn\\
&\leq&
\frac{c^2}{k!}\Delta t e^{c(n-1)\Delta t}
\frac{e^{c(k+1)(n-1)\Delta t}-1}{e^{c(k+1)\Delta t}-1} \varepsilon^{k+2}(|u|_{H^{m;M}}+\|u\|_{\infty})\nn\\
&\leq& \frac{c}{(k+1)!} e^{c(k+2)(n-1)\Delta t}
\varepsilon^{k+2}(|u|_{H^{m;M}}+\|u\|_{\infty}).\nn
\er
Thus we have proved \eqref{eq-8} for all $k\geq 1$.
Then as in the proof of Theorem \ref{thm1-1}, the boundedness of
$e^{c(k+1)(n-1)\Delta t}/k!$ implies that
\bq
\|U_n^{k}- U_n\|_\infty \le
c\varepsilon^{k+1(}|u|_{H^{m;M}}+\|u\|_{\infty}) 
\le c(\tilde M/2)^{(\frac{3}{4}-m)(k+1)}(|u|_{H^{m;M}}+\|u\|_{\infty}), \ \forall k\ge 1.
\eq
Finally, by combining the above estimate with Lemma \ref{lem00}
leads to the desired result \eqref{conv}.
\end{proof}

\section{Numerical Results}
\setcounter{equation}{0}

 We first give some implementation details of the
parallel in time scheme \eqref{n2e5}-\eqref{n2e5a}.
The overall algorithm is described below:

\medskip
{\bf Algorithm} ({\bf A1})

{\sf
\begin{itemize}
\item
 Initialization ($k=0$): $\{U_{1}^{0}(t),
\cdots, U_{N}^{0}(t)\}$, given by solving
$\G_{n} (t,U_{1}^0, \cdots, U_{n-1}^0)$
{\it successively} for $n=1,\cdots,N$.
\item
 At step $k$: Suppose known $\{U_{1}^{k-1}(t),
\cdots, U_{N}^{k-1}(t)\}$. For $n=1\cdots, n$
\begin{enumerate}
\item
  solve the fine problem $\F_{n} (t;U_{1}^{k-1}, \cdots,
U_{n-1}^{k-1})$
{\it simultaneously};
\item
  solve the coarse problem $\G_{n} (t,U_{1}^{k}, \cdots, U_{n-1}^{k})$ 
  {\it successively};
 \item
  $\G_{n} (t,U_{1}^{k-1}, \cdots, U_{n-1}^{k-1})$,
are available from the previous step.
\end{enumerate}
\item
 Update: Using \eqref{n2e5}-\eqref{n2e5a}
to update $\{U_{1}^{k}(t), \cdots, U_{N}^{k}(t)\}$.
\end{itemize}
}

\medskip
Obviously, the most expensive part of this algorithm is to solve
the fine problem
$\F_{n} (t;U_{1}^{k-1}, \cdots,$ $U_{n-1}^{k-1})$.
However, this can be handled by parallel realization.
Next we describe
the matrix form of the linear system associated with
$\F_{n} (t; U_{1}^{k-1}, \cdots, U_{n-1}^{k-1})$,
defined over the subinterval $I_n$.

Let $p^k_n=\F_{n} (t,U_{1}^{k-1},
\cdots, U_{n-1}^{k-1})$.
Then by definition \eqref{1e5}, $p_n^k\in \P_M(I_n)$ satisfies, for all $0\le i \le M$,
\begin{equation}
\label{eq-1}
 p^k_n(\xi_n^i)-
\left(\bar{K}(\xi_n^i,s_n^i), p_n^k(s_n^i)\right)_{ M}
= g(\xi_n^i) +\frac{\Delta t}{2} \sum ^{n-1}_{j=1}
\left(K(\xi_n^i,s_{j}), U^{k-1}_{j}(s_{j})\right)_{M}.
\end{equation}
Using the notations developed in the previous sections, we obtain,
for all $0\le i \le M$:
\begin{equation}
 \label{t2e2}
 p^k_n(\xi_n^i)- \sum_{j=0}^M p_n^k(\xi_n^j)
\left( \bar{K}(\xi_n^i,s_n^i), h_n^j(s_n^i)\right)_{M}
=g(\xi_n^i) +\frac{\Delta t}{2} \sum^{n-1}_{j=1}
\left(K(\xi_n^i,s_{j}), U^{k-1}_{j}(s_{j})\right)_{M}.
\end{equation}
which can be rewritten under the matrix form:
\bq\label{linsys}
({\bi I}-{\bi A}) {\bf p}_n^k={\bi f}_n^k,\quad n=1,\cdots,N.
\eq
In our calculations, the linear system \eqref{linsys} is solved by
the Gauss-Seidel iterative method with the initial guess
$U_n^{k-1}$, obtained at the previous step in Algorithm ({\bf A1}).

The coarse problems $
\G_{n} (t; U_{1}^k, \cdots, U_{n-1}^k)$
can be solved in a similar way.
Note that solving the coarse problems is
strictly sequential with respect to $n$, but
as $\tilde M$ is much smaller than $M$, the cost of this part
is relatively inexpensive.

Below we will present some numerical results obtained by the
proposed parallel in time scheme.

\begin{example} \label{ex51}
Linear Volterra integral equation with a regular kernel:
\bq
 u(t)  + \int_{0}^{t}
\sin(\pi (t-s)) u(s)\d s = g(t), \qquad 0\leq t\leq 100.
\eq
We take
\bex
g(t) = (1-1/2\pi)\sin(\pi t)-\cos(\pi t)/2\pi
\eex
such that the exact solution
  $ u(t) = \sin (\pi t)$.
\end{example}

The main purpose is to investigate the convergence behavior of
numerical solutions with respect to the polynomial degrees $M$ and iteration number $k$.
In  Figure  \ref{fig1},
we plot the $L^\infty$-errors in semi-log scale as a function of $M$,
with $\tilde M$ fixed to 13 and $N$ fixed to 20.
That is,  the domain is partitioned into 20 subintervals and the
coarse algorithm is solved with
13 collocation points in each sub-interval.
To separate different error sources, the solution is iterated
with sufficiently  large number of $k$ so that the error
$\tilde M^{({3\over 4}-m)(k+1)}$ (see \eqref{conv})  is negligible as compared
with the error of the fine resolution.
As expected, the errors show an exponential decay, since in this semi-log
representation one observes that the error variations are
linear versus the degrees of polynomial $M$.

Next we investigate the convergence behavior with respect to the iteration number $k$,
which is more interesting to us.
For a similar reason mentioned above, we now fix a large enough $M=25$,
and let $k$ vary for different values of $\tilde{M}$.
In Figure \ref{fig2}, we plot the error decay with increasing iteration number $k$
for several values of $\tilde{M}$.
It is observed that the errors decay quickly with increasing $k$,
and for ${\tilde M}= 13$ only 6 iterations are required
to reach the machine accuracy.

In the stability analysis, see Lemma \ref{lem1-1}, 
we have assumed sufficiently large $M$ and $\tilde M$. In practical calculations 
it is interesting to see 
how large these polynomial degrees are really needed to guarantee the stability of
the scheme.
In Figure \ref{fig2b}, we present the error behavior as a function of the iteration number $k$ for
$M=13, N=20, \tilde M=4$, and $T=20$. It is observed that the error decays to the machine accuracy
within about 10 iterations.
This result shows that even with moderate $M$ and small 
$\tilde M$, the proposed parallel in time scheme remains convergent.

Finally to test the sharpness of the estimate given in \eqref{conv}, we plot
the errors with respect to the degree of freedom $\tilde{M}$ for three values $k = 2, 3$, and 4 
with $M = 25$ in Figure \ref{fig3}.  
As for an usual spectral method for analytical solutions, see, e.g. \cite{canuto2006spectral}, 
we expect from \eqref{conv} an second error term behaves like $e^{-c\tilde M(k+1)}$ with a fixed 
constant $c$.
The result presented in Figure \ref{fig3} seems to be in a good agreement with the theoretical
prediction,  since in this semi-log representation the error curves for different $k$ are all straight lines with 
slopes corresponding to the constant $c=1.4$.

\begin{figure}[!htb]
\centering
\includegraphics[width=10cm,height=6.9cm]{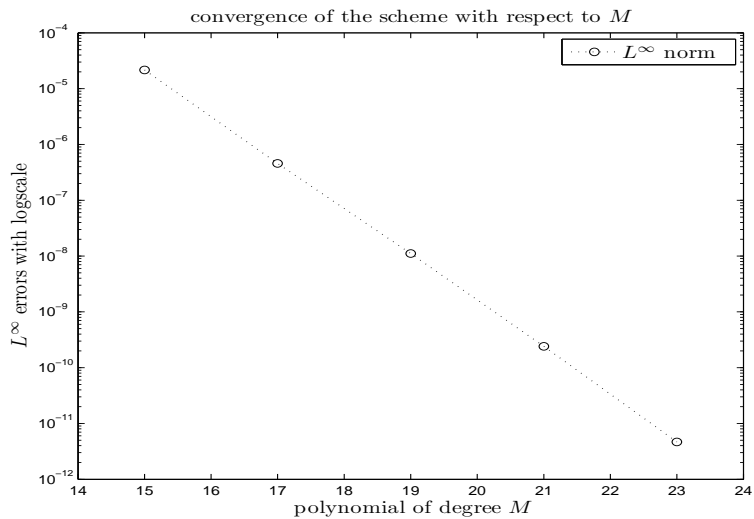}
\caption{$L^\infty $-errors versus the degree of
freedom for fine approximation $M$ with $\tilde{M}=13, N=20$.}
\label{fig1} 
\end{figure}

\begin{figure}[!htb]
\centering
\includegraphics[width=10cm,height=6.9cm]{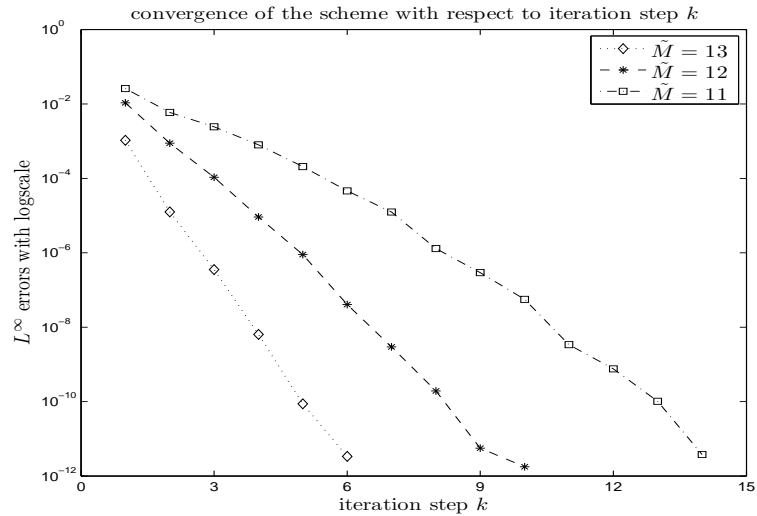}
\caption{$L^\infty $-errors versus iteration numbers $k$ with ${M}=25, N=20$,
 $\tilde{M}=11,12,13$.}
\label{fig2} 
\end{figure}

\begin{figure}[tb]
\centering
\includegraphics[width=11.5cm,height=6.9cm]{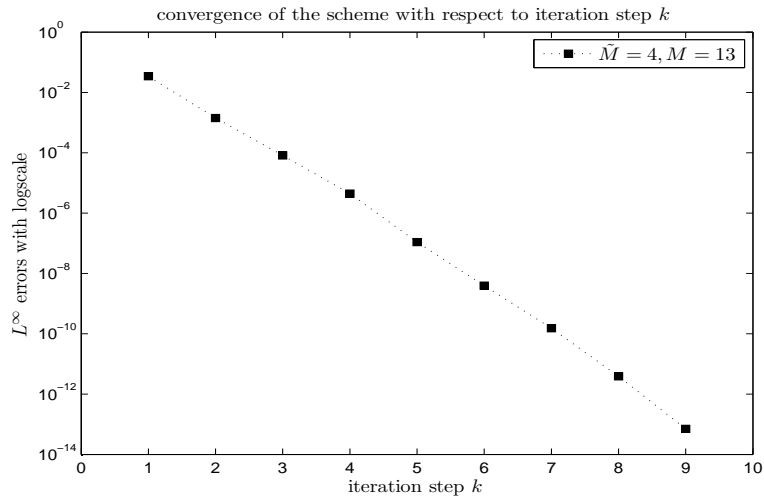}
\caption{$L^\infty $-errors versus iteration number $k$ with $M=13, N=20, \tilde M=4$, and $T=20$.}
\label{fig2b} 
\end{figure}

\begin{figure}[!htb]
\centering
\includegraphics[width=12cm,height=6.9cm]{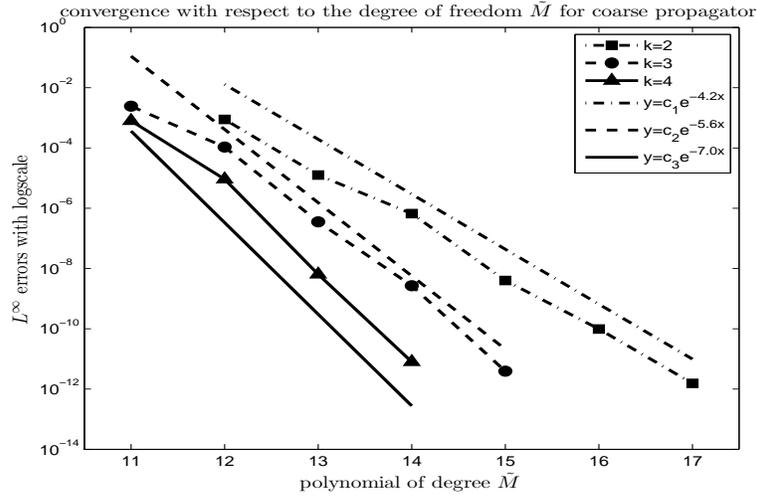}
\caption{$L^\infty $-errors versus the degree of freedom in coarse
approximation $\tilde{M}$ with
${M}=25$,$N=20$ and $k=2,3$, and 4.}
\label{fig3} 
\end{figure}

\newpage
\begin{example}
Consider the Volterra integral equation with the kernel of exponential form 
$K(t,s)=(t-s)e^{s-t}$ and exact solution $u(t) = \frac{1}{4}(2t-1+e^{-2t})$.
\end{example}

We repeat the investigation of the convergence behavior of
numerical solutions as in the first test. 
The $L^\infty$-errors of numerical solutions versus $M$ with $\tilde M=5, N=20$ 
are plotted in Figure \ref{fig5}. It is seen that the convergence remains exponential
with respect to $M$ if the scheme is iterated
a large enough step,
and it appears that the growth in $t$ of the exact solution does not affect 
the convergence rate even for $T = 100$.

%
%

\begin{figure}[tb]
\centering
\includegraphics[width=11cm,height=7cm]{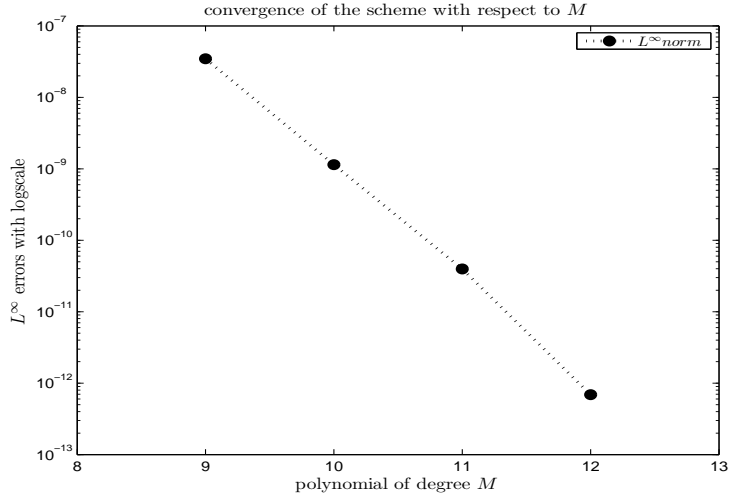}
\caption{$L^\infty $-errors versus $M$ with $\tilde{M}=5, N=20$.}
\label{fig5} 
\end{figure}

\section{Parallelism Efficiency}

Although not yet tested in a parallel machine, the parallelism efficiency of the proposed scheme can be 
investigated through a cost estimate. To simplify the cost estimation, we suppose that 
the inter-processor communication cost in the implementation of the parallel in time scheme is 
negligible as compared to the overall cost. The parallelism efficiency is demonstrated by a cost comparison
between the parallel in time scheme \eqref{n2e5}-\eqref{n2e5a} and the classical sequential scheme based on the 
corresponding fine mesh.
  
Firstly,  the classical sequential scheme based on the fine mesh consists of solving
the problems $\F_{n} \left(t,U_{1}, U_{2}, \cdots, U_{n-1}\right)$ 
consecutively for  $n=1,\cdots,N$.
The computational complexity is equal to the sum of
all the elementary operations in $I_n, n=1,\cdots,N$.
Denote the total computational cost by $\mathcal{C}_{\F}$ and the cost for 
$n$-th subproblem by $\mathcal{C}^n_{\F}$. Then
\[
\mathcal{C}_{\F}=\sum\limits_{n=1}^{N}\mathcal{C}^n_{\F}.
\]
Neglecting the cost of evaluating the integral terms on the right hand sides, 
the spectral discretization $\F_{n}$ produces an 
elementary cost $\mathcal{C}^n_\F $ 
approximately equal to
$\mathcal{O}(M^2 M)$,  where 
$\mathcal{O}(M^2)$ is the number of operations needed for
the matrix vector multiplication and $\mathcal{O}(M)$ 
is the estimated iteration number 
required to achieve the convergence of the iterative method.
As a result, the total computational complexity of the sequential fine solutions is
\bq\label{cost-1}
\mathcal{C}_{\F}=\mathcal{O}(NM^3).
\eq

If we implement the scheme \eqref{n2e5}-\eqref{n2e5a}  in a parallel architecture with enough processors, then
the total computational time corresponds to the cost to solve a sequential set of $N$ coarse subproblems
and a fine subproblem in a single processor.
The cost of solving the sequential set of $N$ coarse subproblems is estimated to be 
$\sum^N_{n=1} \mathcal{C}^n_{\G}$, 
where $\mathcal{C}^n_{\G}=\mathcal{O}(\tilde{M}^2+M\tilde{M})$ is the cost to
solve the $n$-th coarse subproblem, with $M\tilde M$ being
the cost of the fine-to-coarse interpolation. Note that here we only count 
the number of operations needed for
the matrix vector multiplication in the coarse mesh, which is equal to $\mathcal{O}(\tilde{M}^2)$,
because the Gauss-Seidel iterative method has been employed to solve the final linear system
with the previous solution as the initial guess, 
and it is found that the convergence was achieved within a few iterations.
For a same reason, the cost of solving a single fine subproblem is approximately 
$\mathcal{O}(M^2)$. Note also, in the implementation of the parallel in time scheme 
there is a need to interpolate the solution between the fine mesh and coarse mesh, the cost
of which is $\mathcal{O}(NM\tilde{M})$.  Therefore if $K$ is 
the number of iterations required to achieve the desired
convergence of the parallel in time algorithm, then the total computational complexity is
\br\label{cost-2}
K\Big[ \mathcal{O}(N\tilde{M}^2+NM\tilde{M}) + 
\mathcal{O}(M^2) +  \mathcal{O}(NM\tilde{M})\Big].
\er
Comparing \eqref{cost-1} with \eqref{cost-2}, we obtain a speed up 
(i.e., the percentage with respect the sequential scheme) close to
\bex
\mathcal{O}
\left(K\frac{\tilde{M}^2}{M^3}+K\frac{\tilde{M}}{M^2} +
\frac{K}{NM}\right)^{-1}.
\eex
It can be verified that in general case
the speed up is better than $\mathcal{O} (M/K)$.
In some particular cases, 
for example if the number of degrees of freedom for the coarse 
solver is far less than that of the fine solver, 
i.e.,
$\tilde{M} \ll M$ or $\tilde{M}N = M$, 
then the speed up by using the parallel in time algorithm would be 
close to
$\mathcal{O}(\frac{NM}{K})$.

\bibliographystyle{plain}

\end{document}